\newtheorem{thm}{Theorem}[section]
\newtheorem{cor}[thm]{Corollary}
\newtheorem{lem}[thm]{Lemma}
\newtheorem{prop}[thm]{Proposition}
\theoremstyle{definition}
\newtheorem{defn}[thm]{Definition}
\theoremstyle{definition}
\theoremstyle{definition}
\newtheorem{question}[thm]{Question}
\theoremstyle{definition}
\newtheorem{obs}[thm]{Observation}
\theoremstyle{definition}
\newtheorem{ex}[thm]{Example}
\newcommand{\D}{\mathcal{D}}
\renewcommand{\S}{\mathcal{S}}
\newcommand{\A}{\mathcal{A}}
\newcommand{\B}{\mathcal{B}}
\DeclareMathOperator{\Av}{Av}
\DeclareMathOperator{\Type}{Type}
\title{Pattern-restricted permutations composed of 3-cycles}
\author{Kassie Archer}
\address[K. Archer]{University of Texas at Tyler, Tyler, TX 75799 USA}
\email{karcher@uttyler.edu}
\author{Christina Graves}
\address[C. Graves]{University of Texas at Tyler, Tyler, TX 75799 USA}
\email{cgraves@uttyler.edu}
\begin{document}

\maketitle

\begin{abstract}
In this paper, we characterize and enumerate pattern-avoiding permutations composed of only 3-cycles. In particular, we answer the question for the six patterns of length 3. We find that the number of permutations composed of $n$ 3-cycles that avoid the pattern 231 (equivalently 312) is given by $3^{n-1}$, while the generating function for the number of those that avoid the pattern 132 (equivalently 213) is given by a formula involving the generating functions for the well-known Motzkin numbers and Catalan numbers. The number of permutations composed of $n$ 3-cycles that avoid the pattern 321 is characterized by a weighted sum involving statistics on Dyck paths of semilength~$n$.
\end{abstract}

\section{Introduction}



Let us denote by $\S_n$ the set of permutations on $[n]=\{1,2,\ldots,n\}$. We write a permutation $\pi \in \S_n$ in its \emph{one-line notation} as $\pi=\pi_1\pi_2\ldots\pi_n$ where $\pi_i = \pi(i)$. We can also consider a permutation $\pi \in \S_n$ written in its \emph{cycle notation} as a product of disjoint cycles. For example, the permutation $\pi = 2415376$ written in its one-line notation can also be expressed as $\pi = (1, 2, 4, 5, 3)(6, 7)$. In a permutation's cycle notation, we say a cycle is a $k$-cycle if it is a cycle composed of $k$ elements. For example, $\pi = (1, 2, 4, 5, 3)(6, 7)$ is composed of one 5-cycle and one 2-cycle.

A permutation $\pi\in\S_n$ \emph{avoids} the pattern $\sigma\in\S_k$ if there is no subsequence of $\pi$ that appears in the same relative order as $\sigma$. For example, $\pi = 2415376$ avoids the pattern $321$ since there is no subsequence of length 3 in $\pi$ that appears in decreasing order. We denote by $\Av_n(\sigma)$ the set of permutations in $\S_n$ that avoid the pattern $\sigma$. Additionally, we say that a permutation $\pi \in \S_n$ avoids the set of patterns $\{\sigma_1, \sigma_2, \ldots, \sigma_k\}$ if for each $1\leq i \leq k$, $\pi$ avoids $\sigma_i$; we denote by $\Av_n(\sigma_1, \sigma_2, \ldots, \sigma_k)$ the set of such permutations.

Simion and Schmidt began the enumerative study of pattern-avoiding permutations in \cite{SS85}. In the same paper, they enumerated certain pattern-avoiding involutions, i.e.\ those permutations that are their own algebraic inverse. These permutations can be characterized as those that are composed of only 1- and 2-cycles. Since then, pattern-avoiding involutions have been further enumerated by number of fixed points \cite{DRS07, GM02}. 
Recently, papers have posed the question of enumerating certain pattern-avoiding permutations whose algebraic cube is itself. For instance, in \cite{BS19}, B\'{o}na and Smith asked, ``How many 132-avoiding permutations of length $n$ are there in which each cycle length is 1 or 3?"  This question remains open, but in \cite{BD19}, the authors answer the analogous question for the pattern 231.

In this paper, we address related questions for the set of permutations on $[3n]$ whose cycle decomposition consists only of 3-cycles which we denote by $\S_{3n}^\star$. For example, $\S_3^\star = \{312, 231\}$ and for $n\geq 1$, $|\S_{3n}^\star| = \frac{(3n)!}{n!3^n}$.  We will denote those permutations in $\S_{3n}^\star$ that avoid the pattern $\sigma$ by $\Av_{3n}^\star(\sigma)$. 
Note that $\pi \in\Av_{3n}^\star(\sigma)$ if and only if $\pi^{-1} \in \Av_{3n}^\star(\sigma^{-1})$, where $\pi^{-1}$ is the algebraic inverse of the permutation $\pi$. Similarly,  $\pi \in\Av_{3n}^\star(\sigma)$ if and only if $\pi^{rc} \in \Av_{3n}^\star(\sigma^{rc})$ where $\pi^{rc}$ is the reverse-complement of $\pi$. For these reasons, we have the equalities
\[
|\Av_{3n}^\star(231)| = |\Av_{3n}^\star(312)| \quad \text{ and } \quad |\Av_{3n}^\star(132)| = |\Av_{3n}^\star(213)|. 
\]

\begin{figure}
\centering
\begin{tabular}{|c|c|c|}
\hline
Avoiding & Sequence & Theorem\\
\hline
231 & $1, 3, 9, 27, 81, \ldots$ & Theorem \ref{Thm231} \\ \hline
132 & 2, 8, 36, 170, 824 \ldots & Theorem \ref{Thm132}\\ \hline
321 & 2, 10, 60, 388, 2606 \ldots   &Theorem \ref{Thm321} \\ \hline
123 & $2, 6, 0, 0, 0, \ldots$ &Theorem \ref{Thm123} \\ \hline
\end{tabular}
\end{figure}

In Section~\ref{231}, we characterize and enumerate the 231-avoiding permutations that are composed of $n$ 3-cycles. As stated in Theorem \ref{Thm231}, we show that there are $3^{n-1}$ such permutations. Section~\ref{132} characterizes and enumerates the 132-avoiding permutations in $\S_{3n}^\star$. The enumeration is given in Theorem \ref{Thm132} and consists of a sum over compositions of $n$ of certain Catalan and Motzkin numbers. Theorem \ref{Thm132} also gives the generating function for these 132-avoiding permutations in terms of $c(x)$ and $m(x)$, the generating functions for the Catalan numbers and Motzkin numbers, respectively. 
In Section~\ref{321}, we give the enumeration of $\Av_{3n}^\star(321)$ in terms of a weighted sum involving statistics on Dyck paths of semilength $n$ in Theorem \ref{Thm321}.  Finally, we conclude by addressing the pattern 123 and pairs of patterns  in Section~\ref{other}.

\section{Avoiding $231$}\label{231}
We begin by enumerating the set $\Av_{3n}^\star(231)$.  This section is devoted to proving the following theorem.

\begin{thm}\label{Thm231}
For all $n\geq 1$, $$|\Av_{3n}^\star(231)| = 3^{n-1}.$$
\end{thm}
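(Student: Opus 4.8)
The plan is to exploit the well-known direct-sum decomposition of $231$-avoiding permutations and to show that the cycle condition decouples across the summands. Recall that $\gamma\in\S_m$ is \emph{sum-indecomposable} if it cannot be written as a direct sum $\alpha\oplus\beta$ with both parts nonempty. First I would record the standard fact that a $231$-avoiding permutation is sum-indecomposable precisely when its largest entry sits in the first position: if the maximum $m$ occupies position $p>1$, then $231$-avoidance forces every entry left of $p$ to be smaller than every entry to its right, exhibiting a splitting. Since in a direct sum the cycles of $\alpha\oplus\beta$ are exactly the cycles of $\alpha$ together with those of $\beta$, a permutation built only of $3$-cycles decomposes uniquely into sum-indecomposable blocks, each of which is itself $231$-avoiding and composed only of $3$-cycles (so each block has size divisible by $3$). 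Writing $f(n)=|\Av_{3n}^\star(231)|$ with $f(0)=1$, and letting $g(b)$ count the sum-indecomposable such blocks on $3b$ points, this yields $F(x)=1/(1-G(x))$, where $F(x)=\sum_{n\ge 0}f(n)x^n$ and $G(x)=\sum_{b\ge 1}g(b)x^b$.

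Next I would analyze a single indecomposable block $\gamma$ on $[m]$, $m=3b$, with $\gamma_1=m$. Because $\gamma(1)=m$, the cycle through position $1$ is the unique $3$-cycle $(1,\,m,\,c)$ containing both the largest value $m$ and the smallest value $1$, where $c=\gamma(m)$ and $\gamma(c)=1$. Locating the value $1$ at position $c$ and invoking $231$-avoidance shows that the prefix $\gamma_1>\gamma_2>\cdots>\gamma_{c-1}$ is strictly decreasing. The core of the argument is the \emph{peeling} operation: delete the three points of the cycle $(1,m,c)$ and relabel the remaining $m-3$ positions and values in an order-preserving way to obtain $\hat\gamma\in\S_{m-3}$. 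Deleting a union of cycles preserves the property of being composed only of $3$-cycles, and passing to a sub-permutation preserves $231$-avoidance, so $\hat\gamma$ is again a $231$-avoiding permutation built from $3$-cycles.

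It then remains to prove the recursion $g(b)=2\,g(b-1)$ for $b\ge 2$, with $g(1)=1$ (the only block on $[3]$ being $312$). Equivalently, I would establish the combinatorial functional equation $G(x)=x+2xG(x)$: every indecomposable block is either $312$ itself, or is obtained from a unique smaller indecomposable block by one of exactly two admissible ways of re-inserting the top $3$-cycle $(1,m,c)$. The main obstacle is precisely this last claim. One must verify that $\hat\gamma$ is again \emph{indecomposable} with its maximum at the front---which is where the $3$-cycle hypothesis, rather than $231$-avoidance alone, is essential, since for general $231$-avoiders the value $m-1$ need not occupy the second position---and then show that the number of choices of the deleted point $c$ compatible with both $231$-avoidance and the $3$-cycle structure is always exactly two. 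Granting this, $G(x)=x/(1-2x)$, hence
\[
F(x)=\frac{1}{\,1-\frac{x}{1-2x}\,}=\frac{1-2x}{1-3x},
\]
and extracting coefficients gives $f(n)=3^{n}-2\cdot 3^{n-1}=3^{n-1}$ for all $n\ge 1$, as claimed.
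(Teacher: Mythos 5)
Your framework is sound --- the direct--sum decomposition of $231$-avoiders, the fact that cycles respect direct sums so that each block is again a union of $3$-cycles, the identity $F(x)=1/(1-G(x))$, and the final coefficient extraction from $(1-2x)/(1-3x)$ are all correct. But the entire content of the theorem has been pushed into the single claim $g(b)=2g(b-1)$, and you do not prove it: you write ``Granting this'' and move on. Two separate assertions are missing. First, that deleting the cycle $(1,m,c)$ from a sum-indecomposable block and relabelling yields a permutation that is again \emph{indecomposable}; a priori the peeled permutation could be decomposable (e.g.\ a $9$-point indecomposable block could conceivably peel to $312\oplus 312$), in which case the functional equation $G=x+2xG$ would be wrong and you would instead need something like $G=x+2xF$. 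Second, that for each indecomposable $\hat\gamma$ on $[3b-3]$ there are \emph{exactly two} values of $c$ for which re-inserting the cycle $(1,3b,c)$ produces a $231$-avoider. For $\hat\gamma=312$ one can check by hand that of the four candidates $c\in\{2,3,4,5\}$ only $c=3$ and $c=4$ survive (giving $651243$ and $652134$), but no argument is offered for general $b$, and this is exactly where the interaction between $231$-avoidance and the $3$-cycle structure has to be analyzed. As it stands the proof is incomplete.

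For comparison, the paper takes a different route: it defines three explicit insertion operations $E$, $L$, $R$ anchored at the cycle containing the current maximum, proves each preserves the class (Lemma~\ref{lem:onto}), and proves every element of $\Av^\star_{3(n+1)}(231)$ arises uniquely this way (Lemma~\ref{lem:invertible}), yielding a bijection with ternary words of length $n-1$. Your decomposition, if completed, would be a legitimate alternative --- morally, your two admissible values of $c$ correspond to the paper's $L$ and $R$, while the direct-sum decomposition absorbs the role of $E$ --- but the case analysis needed to establish ``exactly two'' is essentially the same work the paper does in Lemma~\ref{lem:invertible}, and it cannot be skipped.
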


Since the permutations in $\Av_{3n}^\star(231)$ avoid $231$ or $(1,2,3)$ in cycle form, they must be composed only of cycles of the form $312$ or  $(1,3,2)$ in cycle form. To prove Theorem~\ref{Thm231}, we will find a bijection between these permutations and words of length $n-1$ on the alphabet $\{E,L,R\}$. 
To this end, we define three operations that take in a permutation in $\Av_{3n}^\star(231)$ and return a permutation in $\mathcal{S}^*_{3(n+1)}$ by specifying positions of inserted elements. 

\begin{defn}\label{def ELR} Let $\pi \in \Av_{3n}^\star(231)$, and let $a< b < 3n$ be the positions of the elements in the 3-cycle containing $3n$. Define operations $E, L,$ and $R$, by inserting three new elements into $\pi$ which will comprise of a 3-cycle of the form $312=(1,3,2)$ as follows:
\begin{itemize}
\item $E(\pi)$ has the three new elements all at the \emph{end} of $\pi$.
\item $L(\pi)$ has the new elements to the \emph{left} of $a$, to the left of $3n$, and to the right of $b$, and
\item $R(\pi)$ has the new elements to the \emph{right} of $a$, to the left of $3n$, and to the right of $b$.
\end{itemize}
The positions of the new elements also determine their values (as stated in the forthcoming observation), while the remaining elements are in the same relative order as $\pi$ and are determined by their positions.
\end{defn}
%
\begin{obs}\label{obs1} Let $\pi \in \Av_{3n}^\star(231)$ with $a < b < 3n$ the positions of the elements in the 3-cycle containing $3n$. Then
\begin{itemize}
\item The elements inserted into $\pi$ to form $E(\pi)$ are $\{3n+1, 3n+2, 3n+3\}$.
\item The elements inserted into $\pi$ to form $L(\pi)$ are $\{a, b+1, 3n+3\}$.
\item The elements inserted into $\pi$ to form $R(\pi)$ are $\{a, b+2, 3n+3\}$.
\item In $R(\pi)$ and $L(\pi)$, the element to the immediate right of $3n+3$ is $3n+2$.
\end{itemize}
\end{obs}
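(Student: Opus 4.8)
The engine of the whole argument is a single fact about cycle notation: if three entries of a permutation constitute one of its $3$-cycles, then the set of positions they occupy equals the set of values they carry. Since Definition~\ref{def ELR} builds each of $E(\pi)$, $L(\pi)$, and $R(\pi)$ so that the three inserted entries form a single $3$-cycle, their set of values is \emph{forced} to coincide with their set of positions. The plan is therefore to first compute, for each operation, the three positions into which the new entries fall, and then read off those positions as the inserted values.

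To compute the positions I would unwind the insertion rule, recalling that because every $3$-cycle of a $231$-avoider has the form $(x,z,y)$ with $x<y<z$, the special cycle places the value $3n$ at position $a$, the value $a$ at position $b$, and the value $b$ at position $3n$. For $E$ the three entries are appended, landing in positions $3n+1,3n+2,3n+3$. For $L$ the three new entries go, in left-to-right order, just before position $a$, just before position $b$, and after position $3n$; accounting for the fact that the first insertion shifts the second slot by one and the first two shift the final slot by two, these occupy positions $a$, $b+1$, and $3n+3$. For $R$ the middle entry is instead placed just after position $b$, moving it to $b+2$, so the occupied positions are $a$, $b+2$, and $3n+3$. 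By the positions-equal-values principle these three sets are exactly the inserted values, which gives the first three bullets.

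It is worth recording why the construction is consistent, since this also prepares the last bullet. Writing $P$ for the common set of inserted positions and values, the old entries fill the positions $[3n+3]\setminus P$ and receive the values $[3n+3]\setminus P$, in each case via the unique increasing bijection $g\colon[3n]\to[3n+3]$ with image $[3n+3]\setminus P$. Because the position relabeling and the value relabeling are the \emph{same} map $g$, this relabeling is a conjugation, so it preserves cycle type and indeed $E(\pi),L(\pi),R(\pi)\in\S^\star_{3(n+1)}$. For the final bullet I then locate the entry $3n+3$: in both $L$ and $R$ the smallest element of $P$ is $a$, and since the new $3$-cycle realizes the pattern $312$, its largest value $3n+3$ sits in the leftmost of its positions, namely position $a$. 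The position $a+1$ lies in $[3n+3]\setminus P$ (as $a<b<3n$ forces $a+1\notin P$), so it holds the image under $g$ of the old entry at position $a$, which was the value $3n$; since $g$ is increasing onto $[3n+3]\setminus P$ it sends the largest old value $3n$ to $\max\bigl([3n+3]\setminus P\bigr)=3n+2$ in both cases. Hence $3n+2$ stands immediately to the right of $3n+3$.

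The step most in need of care is the position bookkeeping in the second paragraph: one must interpret ``left of $a$'', ``left of $3n$'', and ``right of $b$'' with respect to the entries carrying those values, which sit at positions $b$, $a$, and $3n$ respectively, and then correctly tally how each inserted entry displaces the slots to its right. Once the three position sets are verified, every remaining assertion is a formal consequence of the positions-equal-values principle together with the increasing relabeling $g$.
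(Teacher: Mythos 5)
Your proof is correct. The paper offers no argument for this observation (it is stated as an immediate consequence of Definition~\ref{def ELR}), and your reasoning supplies exactly the intended justification: the set of positions of a $3$-cycle equals the set of its values, the insertion bookkeeping yields the position sets $\{3n+1,3n+2,3n+3\}$, $\{a,b+1,3n+3\}$, $\{a,b+2,3n+3\}$ (consistent with the paper's worked example), and the increasing relabeling of the old entries places $3n+2$ at position $a+1$, immediately after $3n+3$. You also correctly flag the one genuine subtlety, namely that ``left of $a$,'' ``left of $3n$,'' and ``right of $b$'' in the definition refer to the entries carrying those values, which sit at positions $b$, $a$, and $3n$ respectively.
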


We demonstrate these operations with an example. 

\begin{ex}
Let  $$\pi = 3\ 1\ 2\ \mathbf{12}\ 11\ 10\ 5\ \mathbf{4}\ 6\ 9\ 7 \ \mathbf{8} $$ be a permutation in $\Av_{12}^\star(231)$ with the 3-cycle containing 12 bolded. Following the notation in Definition~\ref{def ELR}, we have $a=4$ and $b=8$.  Then the three new permutations, $E(\pi), L(\pi),$ and $R(\pi)$, will be of the forms
\begin{align*}
&3\ 1\ 2\ \mathbf{12}\ 11\ 10\ 5\ \mathbf{4}\ 6\ 9\ 7 \ \mathbf{8}\ *\  *\ \ * , \\
&3\ 1\ 2\ *\ \mathbf{12}\ 11\ 10\ 5\ *\ \mathbf{4}\ 6\ 9\ 7 \ \mathbf{8}\ *,  \text{ and}\\
&3\ 1\ 2\ *\ \mathbf{12}\ 11\ 10\ 5\ \ \mathbf{4}\ * 6\ 9\ 7 \ \mathbf{8}\ *,
\end{align*}
where $*$ indicates the position of the new 3-cycle. The positions will also give us the values in the new 3-cycle and those values will appear in the relative order $312$, as seen in Observation~\ref{obs1}. The original elements must be shifted appropriately. For the first new permutation, $E(\pi)$, the new elements are in positions 13, 14, and 15, and therefore no new relabeling of the other elements is necessary. For the other two new permutations relabeling is necessary (since their positions have been shifted in some cases), and we have the following new permutations with the inserted elements bolded. 
$$E(\pi) = 3\ 1\ 2\ 12\ 11\ 10\ 5\ 4\ 6\ 9\ 7 \ 8 \ \mathbf{15 \ 13 \ 14}$$
$$L(\pi) = 3\ 1\ 2\ \mathbf{15} \ 14\ 13\ 12\ 6\ \mathbf{4} \ 5 \ 7\ 11\ 8 \ 10 \ \mathbf{9} $$
$$R(\pi) = 3\ 1\ 2\ \mathbf{15} \ 14\ 13\ 12\ 6\ 5 \ \mathbf{4} \ 7\ 11\ 8 \ 9 \ \mathbf{10}$$
\end{ex}


To prove Theorem \ref{Thm231}, we need to show that $E(\pi), L(\pi),$ and $R(\pi)$ all avoid $231$, and that every permutation in $\Av^\star_{3(n+1)}(231)$ can be obtained from an $E, L$, or $R$ operation performed on a permutation from $\Av^\star_{3n}(231)$.
\begin{lem}\label{lem:onto}
 Let $\pi \in \Av^\star_{3n}(231)$. Then $E(\pi), L(\pi), R(\pi) \in \Av^\star_{3(n+1)}(231)$.
\end{lem}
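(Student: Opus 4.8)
The plan is to establish two properties of each image $E(\pi), L(\pi), R(\pi)$: that it lies in $\S^\star_{3(n+1)}$ (decomposes into $3$-cycles) and that it avoids $231$. For the first property, the key observation is that in each operation the set of \emph{positions} occupied by the three inserted elements coincides with the set of their \emph{values}: for $L$ this common set is $\{a,b+1,3n+3\}$, for $R$ it is $\{a,b+2,3n+3\}$, and for $E$ it is $\{3n+1,3n+2,3n+3\}$. This can be read off from Observation~\ref{obs1} together with a direct count of the insertion positions prescribed in Definition~\ref{def ELR}. Consequently the old elements occupy a set $P$ of positions equal to their set of values, so the restriction of the image to $P$ is a permutation of $P$; since the old elements retain their relative order in both coordinates, this restriction is conjugate to $\pi$ via the order isomorphism $P\cong[3n]$ and hence has the same cycle type, namely all $3$-cycles. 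As the three inserted elements form a single $3$-cycle (a $312$-pattern) disjoint from the rest, the whole image lies in $\S^\star_{3(n+1)}$.

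For $231$-avoidance I would argue by contradiction, supposing the image contains a $231$-pattern in positions $i<j<k$ with values $v_k<v_i<v_j$, and split into cases according to which inserted elements participate. If none do, the three positions are old and, by the order isomorphism above, yield a $231$-pattern in $\pi$, a contradiction. If all three are inserted, they form the pattern $312$, which is not $231$. The substance lies in the mixed cases, and here I would exploit the recursive structure of a $231$-avoider about its largest entry: since $\pi$ avoids $231$ and its maximum $3n$ sits at position $a$, every entry to the left of position $a$ is smaller than every entry to the right, so the values $\{1,\dots,a-1\}$ occupy exactly positions $1,\dots,a-1$ and no value below $a$ appears to the right of position $a$.

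Armed with this decomposition, each mixed case should collapse quickly. The inserted maximum $3n+3$, placed at position $a$, can only play the role of the ``$3$'' in a $231$; but that would require a smaller value after position $a$ to serve as the ``$1$'', which is impossible. The inserted entry at the final position (value $b+1$ for $L$, $b+2$ for $R$) can only play the role of the ``$1$'', and tracing the two required larger entries back through the order-preserving relabeling of old values produces a genuine $231$ in $\pi$, witnessed by the entry of value $b$ at position $3n$. Finally, the small inserted entry of value $a$ cannot be a ``$1$'' (again this would create a $231$ in $\pi$), nor a ``$2$'' or a ``$3$'' (both blocked, since any entry below $a$ lies to the left of position $a$). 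For $E(\pi)$ the argument is shorter still: the inserted entries are the three largest values and occupy the final three positions, so any $231$ using one of them would force its ``$1$'' to be an even later---hence inserted, hence large---entry, which is impossible.

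I expect the main obstacle to be the bookkeeping in the mixed cases for $L$ and $R$: one must track the order-preserving relabeling of the old values precisely enough to locate, inside $\pi$ itself, the $231$-pattern witnessed by a putative mixed pattern. In particular, care is needed to confirm that the entry playing the ``$1$'' role descends to the value-$b$ entry at position $3n$, and that no value below $a$ survives to the right of position $a$ after insertion; the remaining role assignments are then ruled out by the position and value constraints imposed by the decomposition of $\pi$ about $3n$.
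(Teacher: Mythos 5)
Your proposal is correct, but it organizes the argument differently from the paper. The paper's proof rests on a single adjacency principle: each inserted element lands immediately next to the member of the old cycle $(a,3n,b)$ with adjacent value (giving the pairs $(3n+3)(3n+2)$, $a(a+1)$ or $(a+1)a$, and $(b+2)(b+1)$ or $(b+1)(b+2)$), so any occurrence of $231$ using an inserted element but not its partner can be converted into an occurrence avoiding that element; hence a new $231$ must contain one of these three consecutive-value adjacent pairs, and each pair is ruled out in a line (the only nontrivial one being $a(a+1)$, which uses the fact that $a$ is the smallest value right of position $a$). You instead run a role-based case analysis --- for each inserted element, which of the positions ``2'', ``3'', ``1'' of a $231$ it could occupy --- powered by the decomposition of a $231$-avoider about its maximum (values $\{1,\dots,a-1\}$ sit in positions $1,\dots,a-1$); the paper's partner-substitution reappears implicitly in your ``1''-role cases, where you trace the pattern back to $\pi$ via the value-$b$ entry at position $3n$ (and, for the small inserted entry, via the old value-$a$ entry). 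The two routes are comparable in length; the paper's is more uniform, while yours is more explicit about which configurations can actually arise and, usefully, supplies a genuine argument (inserted positions coincide with inserted values, so the old entries form a sub-permutation conjugate to $\pi$) for the membership $E(\pi),L(\pi),R(\pi)\in\S^\star_{3(n+1)}$, which the paper dismisses as ``clearly''. The bookkeeping you flag as the main obstacle does go through as you predict; the only point worth making explicit when you write it up is that in the ``1''-role trace-backs the other two pattern entries must be old entries, which follows because neither remaining inserted value can serve as the ``2'' or ``3'' there.
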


\begin{proof} Clearly, $E(\pi), L(\pi)$, and $R(\pi)$ are in $\S^\star_{3(n+1)}$, so we need only show that these permutations avoid $231$. We note that $E(\pi)$ avoids 231 since the new terms appear at the end of the permutation, are the largest three terms in the permutation, and appear in the order 312. 

Let us consider $L(\pi)$. By Observation~\ref{obs1}, the ``new'' elements added to $\pi$ were the values $\{a, b+1, 3n+3\}$ added in the order 312 in those given positions, where $a<b<3n$ were the positions of the 3-cycle containing $3n$ in $\pi$. The original 3-cycle containing $3n$, upon relabeling, becomes $(a+1, 3n+2, b+2)$. Notice that all new elements are adjacent to the terms in the 3-cycle containing the largest element of $\pi$. Therefore, any new 231 pattern must use adjacent  terms from the new cycle $(a, 3n+3, b+1)$ and the cycle $(a+1, 3n+2, b+2)$ since otherwise an occurrence of 231 using new elements would correspond to an occurrence of 231 in $\pi$. In $L(\pi)$, these terms occur in the order: 
$$
\ldots (3n+3)(3n+2)\ldots a(a+1) \ldots (b+2)(b+1).
$$
The terms $(3n+3)(3n+2)$  cannot be part of a 231 pattern, nor can the terms $(b+2)(b+1)$. It remains to check that $a(a+1)$ is not part of a 231 pattern. To check this, we must show that all elements after $a+1$ are greater than $a$. This means that in $\pi$, we must have that all elements after $a$ are greater than $a$. However, $a$ is the position of $3n$ in $\pi$ and all elements of $\pi$ to the left of $3n$ must be less than all elements to the right of $3n$ in $\pi$. Therefore $a$ is the smallest element to the right of $3n$. 

For $R(\pi)$, we similarly have that the terms comprising the new 3-cycle and the original cycle containing $3n$ in $\pi$ (upon relabeling) appear in the order:
$$
\ldots (3n+3)(3n+2)\ldots (a+1)a \ldots (b+1)(b+2).
$$
Each consecutive pair clearly cannot be part of a 231 pattern, and so no new 231 pattern was added. 
\end{proof}

We now show that every permutation  in $\Av^\star_{3n}(231)$ can be obtained from an $E, L$, or $R$ operation.
\begin{lem} \label{lem:invertible}
Let  $\pi \in \Av^\star_{3(n+1)}(231)$. Then there exists a unique $\tau \in  \Av^\star_{3n}(231)$ so that either $E(\tau) = \pi$, $L(\tau) = \pi$, or $R(\tau)=\pi$.
\end{lem}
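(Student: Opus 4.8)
The plan is to invert the three operations by reading off from $\pi$ which of $E$, $L$, $R$ produced it and then deleting the corresponding 3-cycle. The starting point is a structural description of the cycle $C$ containing the maximum value $M=3n+3$. Since $\pi$ avoids $231$ and $M$ is the global maximum, every value lying to the left of $M$ must be smaller than every value lying to its right (otherwise $M$ together with such a pair forms a $231$); this is exactly the observation already used in the proof of Lemma~\ref{lem:onto}. Combining this with the fact that each cycle has the same set of positions as values, I would force $C$ to occupy positions $s<t<M$ carrying values $M,s,t$ respectively, where $s<t<M$ are precisely the three values of $C$. In particular $M$ sits at position $s$, the smallest value of $C$, and the value $s$ sits at position $t$.

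Next I would classify $\pi$ into three mutually exclusive types according to the local configuration at $M$. First I would show that, for a genuine element of $\Av_{3(n+1)}^\star(231)$, the value immediately following $M$ (at position $s+1$) is forced to be either $s$ itself (which happens precisely when $t=s+1$) or the second largest value $3n+2$; ruling out every other possibility is where the $231$-avoidance and the 3-cycle constraint must be pushed hardest, and I expect this exhaustiveness argument to be the main obstacle. A representative piece of it: if the values above $t$ that lie to the right of $M$ are nonempty, each of their 3-cycles contains an ascent which, together with a small value occurring later, manufactures a $231$ — so the configuration $t=s+1$ in fact forces $s=3n+1$, i.e.\ $C=\{3n+1,3n+2,3n+3\}$ at the final three positions in the order $M,\,3n+1,\,3n+2$ (the $E$-type). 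When the value after $M$ is instead $3n+2$, it belongs to a second cycle $C'$, and I would distinguish $L$ from $R$ by locating $s+1$ relative to $s$: if $s+1$ lies immediately to the right of $s$ we are in the $L$-image, and immediately to the left gives the $R$-image, exactly reversing the adjacencies $a(a+1)$ and $(a+1)a$ recorded in the proof of Lemma~\ref{lem:onto}.

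For each type I would then define the inverse explicitly — remove the three positions of $C$ and relabel the surviving values order-isomorphically — and check that the resulting $\tau$ still consists of 3-cycles, still avoids $231$, and satisfies $E(\tau)=\pi$, $L(\tau)=\pi$, or $R(\tau)=\pi$ respectively. Since deleting a subsequence cannot create a $231$ and the relabeling is monotone, avoidance is immediate; the content is verifying that the relabeled cycle containing $3n$ in $\tau$ occupies the positions $a<b<3n$ that the forward operation expects, so that reinsertion reproduces $\pi$ exactly. Finally, for uniqueness I would note that the three types are mutually exclusive by construction (the value after $M$, and then the side on which $s+1$ lies, are determined by $\pi$), and that within each type the deletion-and-relabeling is the set-theoretic inverse of the corresponding insertion, so that both the operation and $\tau$ are forced. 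Together with Lemma~\ref{lem:onto}, this shows that $(\mathrm{op},\tau)\mapsto\mathrm{op}(\tau)$ is a bijection $\{E,L,R\}\times\Av_{3n}^\star(231)\to\Av_{3(n+1)}^\star(231)$, which is precisely what the lemma asserts.
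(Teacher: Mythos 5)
Your plan follows the paper's proof essentially step for step: both arguments locate the 3-cycle containing the maximum $3n+3$ (your $\{s,t,M\}$ is the paper's $A<B<3n+3$), use the fact that everything to the left of $3n+3$ must be smaller than everything to its right (so that $A$ is the smallest value occurring after $3n+3$), and then show the configuration is forced into exactly one of three mutually exclusive shapes corresponding to $E$, $L$, $R$, after which deletion-and-relabeling inverts the insertion. The one forcing argument you actually carry out --- that $t=s+1$ forces the $E$-configuration, because any additional cycle to the right of $M$ contributes an ascent followed by the small value $t$ sitting in the last position, hence a 231 --- is correct and matches the spirit of the paper's case $B=3n+2$.

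That said, the two forcing steps that constitute the technical core of the lemma are left unproven, and one of them is not even flagged. First, you assert that when $t\neq s+1$ the value in position $s+1$ must be $3n+2$; you correctly identify this as the main obstacle but supply no argument. The paper does it with two short pattern constructions: if the value $A$ precedes $3n+2$ then the values $x,y,B$ in the last three relevant positions form a 231, and if $3n+2$ sits at a position $x>A+1$ then $\pi_{A+1}\,(3n+2)\,A$ is a 231 (using that $A$ is the smallest value right of $3n+3$). Second, and more seriously, your $L$/$R$ dichotomy silently presupposes that the value $s+1$ is \emph{adjacent} to the value $s$ (immediately to its left or right). If it were neither, your classification would cover no case at all and exhaustiveness --- the whole content of Lemma~\ref{lem:invertible} --- would fail. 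This requires its own argument, namely the paper's ``$y=B-1$ or $B+1$'' step: if $y<B-1$ then $(A+1)\,\pi_{B-1}\,A$ is a 231, and if $y>B+1$ then $(B+1)\,y\,B$ is a 231 since $y$ and $B$ occupy the last two positions. With those two short verifications added, your proof is complete and coincides with the paper's.
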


\begin{proof}
Let $A< B< 3n+3$ be the positions of the 3-cycle in $\pi$ containing the element $3n+3$. 
If $B = 3n+2$, then we must also have that $A = 3n+1$. In this case, let $\tau$ be the first $3n$ elements of $\pi$. Clearly, $\tau \in \Av^\star_{3n}(231)$ and $E(\tau) = \pi$. Otherwise, it is enough to show that the cycle containing $3n+2$ consists of the elements $\{A+1, B+1, 3n+2\}$ or $\{A+1, B-1, 3n+2\}$. In the first case we will see that $\pi =L(\tau)$, and in the second case $\pi = R(\tau)$ for some unique $\tau \in  \Av^\star_{3n}(231)$.

Let $x<y<3n+2$ be the positions of the 3-cycle containing $3n+2$. First note that since $\pi$ avoids $231$, everything to the left of $3n+3$ (in position $A$) must be less than everything to the right of $3n+3$. Therefore, $A, B, x, y,$ and $3n+2$ must all appear to the right of $3n+3$.  Notice that this also implies that $A$ is the smallest element that occurs after $3n+3$. 

In position $x$, we have $3n+2$ since this 3-cycle must form a 312 pattern. If $A$ appears before $3n+2$ (that is, if $B<x$), then the subsequence $xyB$, which occurs in positions $y<3n+2<3n+3$, is an occurrence of 231. On the other hand, if $A$ appears after $3n+2$ (that is, $B>x$) and $x\neq A+1$, then $\pi_{A+1}(3n+2)A$ is an occurrence of $231$. Therefore, we must have that $x = A+1$. 

It remains to show that $y = B-1$ or $B+1$. Suppose that $y<B$ and $y\neq B-1$. Then $\pi_y\pi_{B-1}\pi_B = (A+1)\pi_{B-1}A$ is clearly an occurrence of 231. Now suppose instead that $y>B$ and what $y \neq B+1$. Then $(B+1)yB$ is an of 231 in $\pi$ since $yB$ occurs in the last two positions of $\pi$. Therefore it must be the case that $y=B-1$ or $B+1$. 
\end{proof}

We can now prove that the number of 231-avoiding permutations composed of only 3-cycles is equal to $3^{n-1}$. 
\begin{proof}[Proof of Theorem~\ref{Thm231}]
Consider the following map  $\varphi$ from the set $\mathcal{W}_{n-1}^3$ of words of length $n-1$ on letters $\{E, L, R\}$ to $\Av^\star(231)$. Let $w=w_1\ldots w_{n-1} \in\mathcal{W}_{n-1}^3$. Start with the permutation $\pi^1 = (1,3,2) = 312$. For $2\leq i \leq n$, obtain $\pi^i$ from $\pi^{i-1}$ by taking $w_{i-1}(\pi_{i-1})$. Then take $\varphi(w) = \pi^n$. 



By repeatedly applying Lemmas \ref{lem:onto} and \ref{lem:invertible}, we can see that this map is onto and invertible and is thus a bijection. The result follows. 
\end{proof}

\section{Avoiding $132$}\label{132}


For any permutation $\pi \in \Av_{3n}^\star(132)$, each of the $n$ 3-cycles correspond to an occurrence of either the pattern 312 or the pattern 231. If a cycle corresponds to the 312 pattern, we will say the cycle is of the form 312 and similarly, if it corresponds to a 231 pattern, we will say it is of the form 231. In this section, we begin by studying and enumerating those permutations $\pi \in \Av_{3n}^\star(132)$ where all $n$ 3-cycles in $\pi$ are of the form 312. We will ultimately use this result to enumerate all of $\Av_{3n}^\star(132)$ at the end of the section.

To this end,  let us denote by $\A_{3n} \subseteq \Av_{3n}^\star(132)$  the set of permutations on $3n$ elements that avoid the pattern 132 and contain only 3-cycles of the form 312. For each $\pi\in\A_{3n}$, let us define the sets $T_1, T_2, T_3$ and a word $W$ as below. 

\begin{defn} \label{def:T} Let $\pi = \pi_1\pi_2\cdots \pi_{3n} \in \A_{3n}$.
\begin{itemize}
\item Define the sets $T_1(\pi), T_2(\pi),$ and $T_3(\pi)$, or simply $T_1, T_2, T_3$, to be a partition of $[3n]$ where $\pi_j \in T_i$ if $\pi_j$ is the ``$i$" in the 312 pattern realized by its cycle, that is, if $\pi_j$ is the $i^{\text{th}}$ smallest number in its cycle. 
\item Define $W(\pi)=w_1w_2\cdots w_{2n}$, or simply $W$, to be a word where $w_i = j$ if $\pi_{n+i} \in T_j$ for $1 \leq i \leq 2n$ and $1\leq j\leq 3$.
\end{itemize}
\end{defn}



 Example \ref{ex:Ts} shows $T_1, T_2,$ and $T_3$ and the associated word $W$ for a given permutation of length 18. Certain properties of these sets and the word $W$  are outlined in the following lemma.

\begin{lem} \label{PiConditions}
Let $\pi = \pi_1\pi_2\cdots \pi_{3n} \in \mathcal{A}_{3n}$. Then, 
\begin{enumerate}[label=(\alph*)]
\item\label{item:T - 1} $T_1 = \{1,2,\ldots, n\}$, or equivalently $T_3 = \{\pi_1, \pi_2, \ldots, \pi_n\}$;
\item\label{item:T - 2} $W$ is a Dyck word on 1s and 2s; and 
\item\label{item:T - 3} For $i,j \in T_3$, if $i< j$ then $\pi_i < \pi_j$, or equivalently, the elements of $T_2$ appear in increasing order in~$\pi$.
\end{enumerate}
\end{lem}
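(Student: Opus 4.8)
The plan is to reduce everything to the position–value dictionary forced by the $312$-cycle structure and then to locate forbidden $132$-patterns. First I would record that in any cycle the set of positions coincides with the set of values, so a cycle of form $312$ with support $\{i_1<i_2<i_3\}$ must satisfy $\pi_{i_1}=i_3$, $\pi_{i_2}=i_1$, and $\pi_{i_3}=i_2$. Hence the value $i_1\in T_1$ sits in position $i_2$, the value $i_2\in T_2$ sits in position $i_3$, and the value $i_3\in T_3$ sits in position $i_1$; in particular $\pi_p>p$ exactly when $p$ is a cycle-minimum (equivalently $p\in T_1$) and $\pi_p<p$ otherwise. This dictionary also makes the two formulations in part~(a) equivalent, since the positions occupied by $T_3$-values are precisely the cycle-minima, while the positions occupied by $T_2$-values are precisely the set $T_3$.

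For part~(a) I would argue by contradiction with a pigeonhole step. There are exactly $n$ cycle-minima and $|T_1|=n$, so it suffices to show every position $p\le n$ is a cycle-minimum. If some $p\le n$ were not, then $\pi_p<p\le n$; and since the $n$ cycle-minima cannot all fit in the $(n-1)$-element set $\{1,\dots,n\}\setminus\{p\}$, some cycle-minimum $q>n$ must exist, lying in a cycle $\{q<s<t\}$ with $\pi_q=t$ and $\pi_s=q$. Then I expect positions $p<q<s$ to carry the values $\pi_p,\,t,\,q$ with $\pi_p<q<t$, an occurrence of $132$ and the desired contradiction. This forces $\{1,\dots,n\}\subseteq T_1$, and equality follows from $|T_1|=n$.

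Granting part~(a), part~(b) should be nearly immediate: the first $n$ positions hold exactly the $T_3$-values, so positions $n+1,\dots,3n$ hold exactly the $T_1\cup T_2$ values, giving a word on $\{1,2\}$ with $n$ of each letter. Each cycle contributes its single $1$ (at position $i_2$) strictly before its single $2$ (at position $i_3$) since $i_2<i_3$; hence every $2$ counted in a prefix of $W$ is preceded there by the $1$ of its own cycle, which yields the ballot inequality and shows $W$ is a Dyck word. For part~(c) I would again hunt for a $132$: taking two cycles with supports $\{a_1<a_2<a_3\}$ and $\{b_1<b_2<b_3\}$ with $a_3<b_3$ but, for contradiction, $a_2>b_2$, I would first note $b_2<a_2<a_3$, so that positions $b_2<a_3<b_3$ carry the values $\pi_{b_2}=b_1$, $\pi_{a_3}=a_2$, $\pi_{b_3}=b_2$ with $b_1<b_2<a_2$, again a $132$-pattern. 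The main obstacle throughout is keeping track of the constant switching between a number's role as a position and as a value; the genuinely delicate step is part~(c), where one must verify that the assumed order reversal automatically places $b_2$ to the left of $a_3$, so that the three chosen cells really do occur in increasing position order.
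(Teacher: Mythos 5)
Your proof is correct and follows essentially the same strategy as the paper's: establish the position--value dictionary forced by the $312$-cycle form, prove (a) by a pigeonhole argument that produces a cycle lying entirely above $n$ and hence a forbidden $132$, deduce (b) from the within-cycle ordering, and prove (c) by exhibiting a $132$ from an assumed inversion. The only difference is cosmetic: your $132$ witnesses for (a) and (c) use slightly different triples of positions than the paper's (e.g.\ in (c) you start at position $b_2$ where the paper starts at $a_2$), but both choices are valid.
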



\begin{proof} First note that the statements in \ref{item:T - 1} are equivalent since every cycle in $\pi$ has form 312, and thus the positions of elements in $T_3$ are given by the elements in $T_1$. Let us prove \ref{item:T - 1} by contradiction. To this end, suppose that $\pi_k \in T_1$ for some $2 \leq k \leq n$.  The 3-cycle associated with this element must contribute the 312 pattern $k\pi_k\pi_{\pi_k}$. In particular, since $\pi_k$ is the smallest element in its cycle, we must have $\pi_k<k$. Because there are $n$ cycles of the form 312 in $\pi$, there must be some cycle of the form 312  that we can call $\beta = \beta_3\beta_1\beta_2$, whose first element is at a position greater than $k$, that is, $\beta_3=\pi_j$ for some $j>k$. But if its first position is greater than $k$, then $\beta_i > k$ for all $i$.  However, we then have that $\pi_k \beta_3 \beta_1$ is a 132 pattern which is a contradiction. Since $\pi_k \notin T_1$ for all $1 \leq k \leq n$, there also cannot be a $\pi_k \in T_2$ for any $1 \leq k \leq n$ because every ``2" in a cycle of the form 312 must be preceded by a ``1." Thus, $\pi_k \in T_3$ for $1 \leq k \leq n$.

Condition \ref{item:T - 1} implies that $\{\pi_{n+1}, \ldots, \pi_{3n}\} = T_1\cup T_2$, and thus $W$ is a word on 1s and 2s. Furthermore, for each cycle of the form 312   in $\pi$, the smallest element comes before the middle element, so for each $2$, there is a corresponding $1$ that appears before it. Thus $W$ is a Dyck word and \ref{item:T - 2} holds.

Finally, suppose toward a contradiction that \ref{item:T - 3} does not hold and there are two elements $i, j \in T_3$ with $i<j$ and $\pi_i>\pi_j$. Because $\pi_i$ is the ``2" in its cycle of the form 312, the element $\pi_{\pi_i}$ appears before $\pi_i$ and is the ``1'' in the cycle. By \ref{item:T - 1}, $\pi_{\pi_i}$ is also less than both $\pi_i$ and $\pi_j$. Therefore, $\pi_{\pi_i}\pi_i\pi_j$ is a 132 pattern which is a contradiction. 
\end{proof}

We illustrate Definition \ref{def:T} and  Lemma~\ref{PiConditions} with the following example.
\begin{ex}\label{ex:Ts}
Let 
\begin{align*}
 \pi& = 18\ 16\ 14\ 15\ 12\ 11\ 6\ 5\ 3\ 4\ 7\ 8\ 2\ 9\ 10\ 13\ 1\ 17\\&= (1,18,17)(2,16,13)(3,14,9)(4,15,10)(5,12,8)(6,11,7).
 \end{align*}
 Then
 \begin{align*}
  T_1 &= \{1,2,3,4,5,6\},\\
 T_2 &= \{7,8,9,10,13,17\},\\
   T_3 &=\{11,12,14,15,16,18\},
 \end{align*}
 and \[ W = 111122122212\] is the corresponding Dyck word. 
 Furthermore, the elements of $T_2$ are in increasing order in $\pi$.
 \end{ex}
 
We now have that each permutation in $\A_{3n}$ is associated with a Dyck word on 1s and 2s of length $2n$; however, this is not a bijective map.  For instance, both the permutation $561234$ and $652134$ are associated with the Dyck word $1122$. To count the number of permutations in $\A_{3n}$ associated with each Dyck word, we need more information about the Dyck word. The following definition assigns a ``type" to a given Dyck word.

%
%

\begin{defn} \label{TypeW}
Let $W = w_1\cdots w_{2n}$ be a Dyck word on 1s and 2s of length $2n$. Let $S_1$ and $S_2$ be sets containing the positions of the 1s and 2s, respectively, and write $S_1= \{u_1 < u_2 < \cdots < u_n\}$ and $S_2= \{v_1 < v_2 < \cdots < v_n\}$. 
Define $S \subset [n]$ so that $i \in S$ if and only if either
\begin{itemize}
\item $v_i+1 \in S_1$, or
\item $u_i + 1 \in S_2$.
\end{itemize}
In other words, $i \in S$ if and only if either
\begin{itemize}
\item the $i^{\text{th}}$ 2 in $W$ is followed by 1, or
\item the $i^{\text{th}}$ 1 in $W$ is followed by 2.
\end{itemize}
Label the elements of $S$ as $s_1, s_2,\ldots, s_k$ in increasing order. Let $x_1 = s_1$ and $x_i = s_i - s_{i-1}$ for $2 \leq i \leq k$. Then we say $W$ is of \emph{type} $(x_1, x_2, \ldots, x_k)$, and write $\Type(W) = (x_1, x_2, \ldots, x_k)$.
\end{defn}

It is not hard to see that $\Type(W)= (x_1, x_2, \ldots, x_k)$ is a composition of $n$. The reasoning is that since the $n^{\text{th}}$ 1 in a Dyck word on 1s and 2s of length $2n$ is followed by a 2, it must be the case that the largest element in $S$ is $n$. Because $\Type(W)$ is formed from the differences of elements in $S$, then $\Type(W)$ is a composition of its largest element. 
%
%
%



\begin{ex}\label{ex:type}
Consider the Dyck word $W = 111122122212$  from Example~\ref{ex:Ts}. Here, the positions of the 1s and 2s are given by $S_1 = \{1,2,3,4,7,11\}$ and $S_2 = \{5,6,8,9,10,12\}$, respectively. Therefore, $2\in S$ since $v_2+1 = 6+1 = 7\in S_1$ and $4,5,6\in S$ since $u_4+1 = 5, u_5+1= 8, u_6+1=12 \in S_2$. The set $S$ in Definition \ref{TypeW} is $S = \{2,4,5,6\}$. The differences of successive elements in $S$ form the type of $W$, and thus $\Type(W) = (2,2,1,1)$. Note that $(2,2,1,1)$ is a composition of 6.
\end{ex}

It turns out that every composition of $n$ can be realized as a type for some Dyck path. Furthermore, we can count Dyck words of a given type by using the well-known Motzkin numbers which have numerous combinatorial interpretations.  A \emph{Motzkin path of length $n$} is a path from $(0,0)$ to $(n,0)$ consisting of up step $(1,1)$, flat steps $(1,0)$, or down steps $(1,-1)$, that never dips below the $x$-axis.  The corresponding \emph{Motzkin word} to a Motzkin path is the word consisting of $U$s, $F$s, and $D$s for each up step, flat step, and down step, respectively. The number of Motzkin paths of length $n$ is the $n^{\text{th}}$ \emph{Motzkin number} $M_n$.

\begin{lem} \label{MotType} Let $X=(x_1, x_2, \ldots, x_k)$ be a composition of $n$. Then there are $M_{k-1}$ Dyck words of type $X$.
\end{lem}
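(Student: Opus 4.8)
The plan is to reduce the statement to a bijection with Motzkin paths of length $k-1$. First I would observe that $\Type(W)$ depends only on the set $S$ of Definition~\ref{TypeW}: a composition $X = (x_1,\ldots,x_k)$ of $n$ corresponds bijectively to the set $S = \{x_1,\, x_1+x_2,\, \ldots,\, x_1+\cdots+x_k\} \subseteq [n]$, which has $k$ elements and always contains $n$. Hence it suffices to show that, for each fixed $S$ with $n \in S$ and $|S| = k$, the number of Dyck words whose associated set (in the sense of Definition~\ref{TypeW}) is exactly $S$ equals $M_{k-1}$; in particular this count depends only on $k$.

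Next I would analyze such words through their maximal runs, writing $W = 1^{a_1}2^{b_1}1^{a_2}2^{b_2}\cdots 1^{a_m}2^{b_m}$ with all $a_i,b_i \geq 1$. Setting $A_j = a_1+\cdots+a_j$ and $B_j = b_1+\cdots+b_j$, the Dyck condition is exactly $A_j \geq B_j$ for all $j$, with $A_m = B_m = n$. Unwinding Definition~\ref{TypeW}, an index enters $S$ either because a $1$-run is immediately followed by a $2$ (a ``peak'', contributing $A_j$) or because a $2$-run is immediately followed by a $1$ (a ``valley'', contributing $B_j$ for $j \leq m-1$). Thus $S = \{A_1,\ldots,A_m\} \cup \{B_1,\ldots,B_{m-1}\}$, and reconstructing $W$ from a fixed $S$ amounts to deciding, for each $s \in S$, whether it is a peak index, a valley index, or both.

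I would encode this choice by labeling the elements of $S$, read in increasing order, by $a$ (peak only), $b$ (valley only), or $c$ (both), and then reading the labels as steps of a lattice path, with $a$ an up step, $b$ a down step, and $c$ a flat step. The key computation is to check that the requirements ``$A_j \geq B_j$ for all $j$'', ``$n$ is a peak index'', and ``there is exactly one more peak index than valley index'' translate precisely into: the path starts at height $0$, stays weakly below $0$ on every proper prefix, and ends at height $1$ (which in turn forces the final label to be $a$). The final step being forced, I would delete it and reflect the remaining path across the axis to obtain a path of length $k-1$ with up, down, and flat steps, running from $0$ to $0$ and staying weakly above $0$ --- that is, a Motzkin path of length $k-1$. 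Verifying that this assignment is a genuine bijection, so that every such labeling reconstructs a Dyck word with associated set exactly $S$, then yields the count $M_{k-1}$.

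The main obstacle is the bookkeeping around the labels $c$, namely the indices that are simultaneously a peak index and a valley index ($A_j = B_{j'}$); these coincidences are exactly what make the map from words to sets non-injective. They must be handled carefully at two points: in verifying that the value inequality $A_j \geq B_j$ is equivalent to the prefix inequality for the lattice path even when peak and valley positions collide, and in confirming that the reconstruction produces runs $a_i, b_i$ that are all strictly positive, so that the resulting word has associated set exactly $S$ rather than a proper subset. Once the equivalence of the two formulations of the inequality is established by the standard ballot-sequence argument, the remaining verifications are routine.
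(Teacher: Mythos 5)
Your argument is correct, and the key combinatorial content --- classifying each element of the set $S$ from Definition~\ref{TypeW} as a ``peak index,'' a ``valley index,'' or both, and reading these three labels as the up, down, and flat steps of a Motzkin path --- is, at bottom, the same three-way classification the paper uses. But your route to it is genuinely different. The paper factors the count through an intermediate family: it first gives a run-expansion bijection from Dyck words of type $X$ to Dyck words of type $(1,1,\ldots,1)$ of length $2k$, and only then maps those to Motzkin paths position by position, verifying the Motzkin condition by comparing the numbers of occurrences of $11$ and $22$. You instead fix the set $S$ directly, write $W = 1^{a_1}2^{b_1}\cdots 1^{a_m}2^{b_m}$, identify $S = \{A_1,\ldots,A_m\}\cup\{B_1,\ldots,B_{m-1}\}$ with $A_j, B_j$ the partial sums, and convert the Dyck inequality $A_j \geq B_j$ into a prefix condition on the labeled path by the ballot correspondence, finishing with a reflection. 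Your version buys a single self-contained bijection and an explicit, checkable treatment of the two points the paper leaves implicit --- why the image path is Motzkin (your ballot argument, including the coincidences $A_j = B_{j'}$ that produce flat steps) and why the inverse lands on a word whose associated set is exactly $S$ (strict positivity of the reconstructed runs $a_i, b_i$, with the endpoint case $b_m = n - B_{m-1} > 0$ following from $n$ being labeled as a peak). The paper's version buys a cleaner statement of where the composition $X$ enters (purely in the run-expansion step) at the cost of asserting invertibility of the second map rather than proving it. Both are complete modulo routine verifications, and yours is arguably the more careful of the two.
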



\begin{proof}
We begin by creating a bijection between the set of Dyck words of length $2n$ of type $X$ and the set of Dyck words of length $2k$ of type $(1,1,\ldots,1)$ as follows.
Let $W$ be a Dyck word of length $2k$ of type $(1,1,\ldots,1)$. Replace the $i^{\text{th}}$  1 with $x_i$ 1s and the $i^{\text{th}}$ 2 with $x_i$ 2s. This clearly creates a Dyck word of length $2n$ of type $X$. This process is also invertible (by taking the $i^\text{th}$ block of 1s of length $x_i$ and replacing it with one 1 and doing the same thing for the 2s), and is thus a bijection.

Next, we create a bijection between the set of Dyck words of length $2k$ of type $(1, 1,\ldots, 1)$ and the set of Motzkin paths of length $k-1$ as follows. Let $W$ be a Dyck word of length $2k$ of type $(1,1,\ldots,1)$. Create a Motzkin word $M= m_1m_2\ldots m_{k-1}$ by the following procedure:
\begin{itemize}
\item If the $i^{\text{th}}$ 1 and $i^{\text{th}}$ 2 are both followed by 1, then set $m_i=U$.
\item If the $i^{\text{th}}$ 1 and $i^{\text{th}}$ 2 are both followed by 2, then set $m_i = D$.
\item 
Otherwise, set $m_i = F$.
\end{itemize}
Before showing that this procedure results in a Motzkin path, we give an equivalent condition for setting $m_i=F$. Note that a Dyck word is of type $(1,1,\ldots,1)$ if and only if there is no $i$ so that the $i^{\text{th}}$ 1 is followed by 1 and the $i^{\text{th}}$ 2 is followed by 2. Therefore, the third bullet point above could be written as: 
\begin{itemize}
\item If the $i^{\text{th}}$ 1 is followed by 2 and the $i^{\text{th}}$ 2 is followed by 1,  then set $m_i=F$. 
\end{itemize}


Let us now show that we indeed get a Motzkin word from this procedure. Because of the characterization above, we have that $m_i=U$ exactly when the $i^{\text{th}}$ 1 is followed by 1 and $m_i=D$ exactly when the $i^{\text{th}}$ 2 is followed by 2. Clearly, there are an equal number of occurrences of 11 as there are occurrences of 22 in any Dyck path. (Indeed, both are equal to $n-b$ where $b$ is the number of consecutive runs or blocks of 1s.) Also, since in a Dyck path the number of 1s weakly exceeds the number of 2s, similarly must the number of occurrences of 11 exceed the number of occurrences of 22. Thus in the word $M$, the number of $U$s must exceed the number of $D$s at any given point and is therefore a Motzkin word. 

Since in any Dyck word, you must start with 1, end with 2, and the last 1 must be followed by 2, it is straightforward to invert the process to obtain a Dyck path of type $(1,1,\ldots, 1)$ from a given Motzkin path. 
\end{proof}

In the following example, we find the four Dyck words with type $(2,2,1,1)$.

\begin{ex}
By Lemma \ref{MotType}, there are $M_3=4$ Dyck words of type $(2,2,1,1)$. The four Motzkin words of length three are
\[ UDF, FUD, UFD, \text{ and } FFF.\]
To find the Dyck word $W = w_1w_2w_3w_4w_5w_6w_7w_8$ of type $(1,1,1,1)$ from the Motzkin word $UDF,$ we start by setting $w_1=1$. Since $m_1=U$, the first 1 and the first 2 are followed by 1, so $w_2=1$. Since $m_2=D$, both the second 1 and the second 2 are followed by 2.  Thus, $w_3=2$ and $w_4=1$. Finally, because $m_3=F$, the third 1 must be followed by 2 and the third 2 is followed by 1.  Thus, $w_5=2$, $w_6=2$, and $w_7=1$. Finally, we set $w_8=2$ and obtain the Dyck word $W=11212212$.
A similar process works for the other Motzkin words, and we have following 4 Dyck words of type $(1,1,1,1)$:
\[ 11212212, 12112122, 11212122, 12121212.\]
Now to create the four corresponding Dyck words of type $(2,2,1,1)$, we replace the first 1 with two 1s, the first 2 with two 2s, the second 1 with two 1s, and the second 2 with two 2s:
\[ 111122122212, 112211122122, 111122122122, 112211221212.\]
These are the four Dyck words of type $(2,2,1,1)$.
\end{ex}

%
%
%
%

Recall that we are trying to enumerate the permutations in $\A_{3n}$. The following lemma counts the number of elements in $\A_{3n}$ associated to a given Dyck word $W$.

\begin{lem}\label{CatType}
Let $W$ be a Dyck word of length $2n$ with $\Type(W) = (x_1, x_2, \ldots x_k)$. Then the number of permutations in $\A_{3n}$ with corresponding Dyck word $W$ is
\[ C_{x_1}C_{x_2}\cdots C_{x_k} \]
where $C_k$ is the $k^{\text{th}}$ Catalan number.
\end{lem}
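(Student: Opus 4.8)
The plan is to understand how a fixed Dyck word $W$ constrains a permutation $\pi \in \A_{3n}$, and then show that the remaining freedom factors as a product of independent Catalan-counted choices, one per part of $\Type(W)$.

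By Lemma~\ref{PiConditions}, once we fix $W$ we know a great deal about $\pi$: condition \ref{item:T - 1} tells us $T_1 = \{1,\ldots,n\}$ and $T_3 = \{\pi_1,\ldots,\pi_n\}$, so the positions of the $T_3$-elements are exactly determined by the values in $T_1$ (every cycle has form $312$, so the ``$3$'' sits in the position named by the ``$1$''). The word $W$ records, for each of the last $2n$ positions, whether it holds a $T_1$-element or a $T_2$-element. What $W$ does \emph{not} determine is the actual matching: which specific ``$1$'' (smallest element of a cycle) is paired with which specific ``$2$'' (middle element) and which ``$3$'' (largest element). First I would make precise exactly which data remains free after $W$ is fixed, and show that by \ref{item:T - 3} the $T_2$-elements must appear in increasing order, so the only genuine freedom is in how the cycles' three members are assembled consistently with the $312$-form and the $132$-avoidance.

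The key step is to relate the ``type'' decomposition of $W$ to a block structure of the permutation. Recall from Definition~\ref{TypeW} that $S$ consists of those indices $i$ where the $i^{\text{th}}$ $2$ is followed by a $1$, or the $i^{\text{th}}$ $1$ is followed by a $2$ — exactly the places where the word switches character in a way that closes off a self-contained sub-Dyck-word. The composition $(x_1,\ldots,x_k)$ measures the lengths of these maximal ``balanced segments.'' I would argue that each such segment of $W$ corresponds to an interval of cycles that can be arranged among themselves independently of the others: within a segment accounting for $x_j$ cycles, the number of ways to complete the $312$-patterns while avoiding $132$ is precisely the number of $132$-avoiding (equivalently Catalan-many) structures on $x_j$ objects, giving the factor $C_{x_j}$. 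Because the segments are balanced and consecutive, a $132$-pattern cannot straddle two different segments (the values in later segments are separated from earlier ones), so the choices multiply, yielding $C_{x_1}C_{x_2}\cdots C_{x_k}$.

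The main obstacle I expect is proving the independence and the per-segment Catalan count rigorously — that is, establishing that (i) no forbidden $132$-pattern can arise \emph{between} two different type-segments, so the global count really factors, and (ii) within a single segment of size $x_j$ the admissible arrangements are in bijection with a Catalan-indexed family (say $132$-avoiding permutations, Dyck paths, or nonnesting/noncrossing matchings of size $x_j$). I would handle (i) by a value-range argument using \ref{item:T - 1} and the definition of $S$, showing the ranges of values assigned to distinct segments are order-separated so any $132$ would have to stay within one segment; and (ii) by exhibiting an explicit bijection between the internal degrees of freedom (the matching of the ``$2$''s and ``$3$''s to the ``$1$''s inside the segment, constrained by $312$-form and $132$-avoidance) and a standard Catalan object of size $x_j$, checking that it is well-defined and invertible.
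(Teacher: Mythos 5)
Your overall architecture matches the paper's proof: fix $W$, use Lemma~\ref{PiConditions} to pin down $T_1,T_2,T_3$ and the increasing order of the $T_2$-elements, observe that the only remaining freedom is the placement of the elements of $T_1=[n]$ in the positions coming from the $1$s of $W$, split those positions into $k$ groups of sizes $x_1,\dots,x_k$, prove that the values in distinct groups are order-separated, and count the choices within each group by $132$-avoiding permutations. That is exactly the route the paper takes.

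However, there is a genuine error in the step where you identify what the groups are. You gloss the set $S$ of Definition~\ref{TypeW} as marking ``places where the word closes off a self-contained sub-Dyck-word,'' and describe $(x_1,\dots,x_k)$ as the lengths of ``maximal balanced segments.'' That is the decomposition of $W$ into irreducible Dyck factors, and it is \emph{not} what $\Type(W)$ records. An index $i$ lies in $S$ when the $i^{\text{th}}$ $1$ is immediately followed by a $2$ \emph{or} the $i^{\text{th}}$ $2$ is immediately followed by a $1$; the resulting groups consist of $1$s that are consecutive in $W$ \emph{and} whose like-indexed $2$s are consecutive in $W$, with a new group starting whenever either run breaks. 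For $W=111122122212$ the irreducible factors are $1111221222$ and $12$, of semilengths $5$ and $1$, which would give $C_5C_1=42$; the actual type is $(2,2,1,1)$ and the correct count is $C_2C_2C_1C_1=4$. So if you carried out your plan with the segments as you describe them, both the per-segment Catalan factors and the cross-segment independence argument would be built on the wrong intervals of cycles and the count would come out wrong. Once the groups are corrected to the ones actually defined by $S$ (the sets the paper calls $\mathcal{U}_i$ and $\mathcal{V}_i$), the remaining two tasks you list — proving the inter-block value separation is forced (the paper does this by contradiction, splitting into the case where a block of $2$s separates the two blocks of $1$s and the case where a block of $1$s separates the corresponding blocks of $2$s) and showing each block realizes an arbitrary $132$-avoiding pattern — do go through as you outline.
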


\begin{proof}
Let us start with a Dyck word $W=w_1w_2\cdots w_{2n}$ with $\Type(W) = (x_1, x_2, \ldots, x_k)$. Concurrent with Definition \ref{TypeW}, let $S_1= \{u_1 < u_2 < \cdots < u_n\}$ and $S_2 = \{v_1 < v_2 < \cdots < v_n\}$ be the set of positions of $W$ containing 1s and 2s, respectively. Now suppose that $\pi = \pi_1\pi_2\cdots\pi_{3n} \in \A_{3n}$ is a permutation with corresponding Dyck word $W$. We will find the conditions that $\pi$ must satisfy.

First, by Lemma \ref{PiConditions}, we know that $T_1(\pi) = \{1,2,\ldots, n\}$. The same lemma says that \[T_2(\pi) = \{u + n \mid u \in S_1\};\] these elements in $T_2(\pi)$ occur in increasing order in $\pi$ and are in positions \[T_3(\pi) = \{v + n \mid v \in S_2\}.\] We will determine where the elements in $T_1(\pi)$ can occur in $\pi$, which in turn determines $\pi$.

Consider again the Dyck word $W$. From the definition of type, we know that there are $k$ blocks of consecutive 1s in $W$ where the $i^{\text{th}}$ such block is of size $x_i$. Let us denote by $\mathcal{U}_i$ the set of positions in $\pi$ associated with the $i^{\text{th}}$ block of 1s, and by $\mathcal{V}_i$ the set of positions in $\pi$ associated with the $i^{\text{th}}$ block of 2s in $W$, so that $|\mathcal{U}_i|=|\mathcal{V}_i| = x_i$. Formally, we have that \[\mathcal{U}_i = \{u_j + n \mid 1 \leq j - (x_1 + \cdots + x_{i-1}) \leq x_i\},\] 
and
 \[\mathcal{V}_i = \{v_j + n \mid 1 \leq j - (x_1 + \cdots + x_{i-1}) \leq x_i\}.\] 
%
%
%
%
%
%
%
We will first show that for every $i<j$, each element of $\pi$ with position in $\mathcal{U}_i$ is greater than each element of $\pi$ with position in $\mathcal{U}_j$. Then we will show that the subsequence of $\pi$ given by positions in $\mathcal{U}_i$ can be taken to be order-isomorphic to any 132-avoiding permutation. 
Since the number of 132-avoiding permutations is given by the Catalan numbers, this would prove the lemma. 

Let us suppose for contradiction that there is some $i<j$ so that there exists $a\in\mathcal{U}_i$ and $b\in \mathcal{U}_j$ with $\pi_a<\pi_b$ and let us take it to be the largest such occurrence (with $j$ taken to be as large as possible, followed by the largest possible $i$). If there is some block of 2s between the $i^{\text{th}}$ block of 1s and the $j^{\text{th}}$ block of 1s in $W$ that we associate with $\mathcal{V}_m$ for some $m$, then there is an occurrence of 132 given by $\pi_a \pi_z\pi_b$ where $z$ is an element of $\mathcal{V}_m$.

If there is no block of 2s between the $i^{\text{th}}$ block of 1s and the $j^{\text{th}}$ block of 1s in $W$, then by the definition of type, there must be a block of 1s that occurs between the  $i^{\text{th}}$ block of 2s and the $j^{\text{th}}$ block of 2s in $W$, associated to $\mathcal{U}_r$ for some $r$. Since $i<j$, we must have $a<b$. Since $a<b$, $\pi_a<\pi_b$, and $a,b\in T_2$, we must also have that $\pi_{\pi_a}<\pi_{\pi_b}$. Furthermore, since $\pi_a$ and $\pi_b$ are taken have as large an index as possible, there must be some $t \in \mathcal{U}_r$ with $\pi_{\pi_a}<t<\pi_{\pi_b}$ (since $\pi_{\pi_a}$ is the position of $a$ and $\pi_{\pi_b}$ is the position of $b$). But $t\in T_2$ as well, so $t$ appears after $\pi_t$, and in particular after $\pi_{\pi_b}$. Therefore $\pi_{\pi_a}\pi_{\pi_b}t$ is an occurrence of 132. 

Finally, since the elements of $T_2$ occur in increasing order in the permutation $\pi$, we can make the following observations:
\begin{itemize}
\item  If we take any 132-avoiding permutation to be the pattern realized in positions in $\mathcal{U}_i$ for any $i$, then certainly there are no occurrences of 132 in $\pi_{n+1}\ldots \pi_{3n}$; and    
\item $\pi_1\ldots \pi_n$ will be composed of $k$  blocks, each of which has elements smaller than the block preceding it. Furthermore, the $i^{\text{th}}$ block is size $x_i$ and it forms the pattern whose inverse is the pattern realized by those positions in $\mathcal{U}_i$. 
\end{itemize}
With these two observations, we see that there will be no occurrences of 132 by taking the pattern in positions in $\mathcal{U}_i$ to be any 132-avoiding permutation. Since 132-avoiding permutations are enumerated by the Catalan numbers, the proof is complete. 
\end{proof}

The following example shows the permutations with  Dyck word $W = 111122122212$ from Example \ref{ex:Ts}.

\begin{ex}
Let $W = 111122122212$ which has type $(2,2,1,1)$ by Example \ref{ex:type}. $W$ has 4 consecutive blocks of 1s (and similarly 2s) where the 1st and 2nd blocks are size 2 and the 3rd and 4th blocks are size 1, as shown below:
\[
W = 11 \, \, 11 \, \, 22 \, \, 1 \, \, 22 \, \, 2 \, \, 1 \, \, 2.
\]
Notice that the 1s in $W$ are in positions $S_1 = \{1,2,3,4, 7, 11\}$. Thus, any permutation $\pi \in \A_{18}$ with Dyck word $W$ will have $T_2 = \{7,8,9,10, 13, 17\}$, and these elements occur in $\pi$ increasing order.  Using the notation in the proof of Lemma \ref{CatType}, we have
\[ \mathcal{U}_1 = \{7,8\},\ \mathcal{U}_2 = \{9,10\},\ \mathcal{U}_3=\{13\}, \text{ and }\  \mathcal{U}_4 = \{17\}.\]
Furthermore, if $i < j$ and $a \in \mathcal{U}_i$ and $b \in \mathcal{U}_j$, then $\pi_a > \pi_b$.  Combining these facts shows that $\pi$ must look like:
\[ 18\ 16\ \rule{.25cm}{.25mm}\ \rule{.25cm}{.25mm}\ \rule{.25cm}{.25mm} \ \rule{.25cm}{.25mm} \ \framebox{\strut \rule{.25cm}{.25mm} \ \rule{.25cm}{.25mm}} \ \framebox{\strut \rule{.25cm}{.25mm} \ \rule{.25cm}{.25mm}} \ 7\ 8\ \framebox{\strut 2}\ \ 9\ 10\ 13\ \boxed{\strut 1}\  17, \]
where the first block must be filled in with the numbers $\{5,6\}$ and the second block filled in with the numbers $\{3,4\}$ where each block avoids the pattern 132.  (The third and fourth blocks have been filled in with the one possibility.) Thus, the $C_2C_2C_1C_1 = 4$ possibilities for $\pi$ are:
\begin{align*}
 & 18\ 16\ 14\ 15\ 11\ 12\ \framebox{5\ 6}\ \framebox{3\ 4}\ 7\ 8\ \framebox{2}\ 9\ 10\ 13\ \framebox{1}\ 17\\
 & 18\ 16\ 14\ 15\ 12\ 11\ \framebox{6\ 5}\ \framebox{3\ 4}\ 7\ 8\ \framebox{2}\ 9\ 10\ 13\ \framebox{1}\ 17\\
 & 18\ 16\ 15\ 14\ 11\ 12\ \framebox{5\ 6}\ \framebox{4\ 3}\ 7\ 8\ \framebox{2}\ 9\ 10\ 13\ \framebox{1}\ 17\\
 & 18\ 16\ 15\ 14\ 12\ 11\ \framebox{6\ 5}\ \framebox{4\ 3}\ 7\ 8\ \framebox{2}\ 9\ 10\ 13\ \framebox{1}\ 17\\
\end{align*}

\end{ex}

We are now ready to enumerate $\A_{3n}$.

\begin{prop} \label{CountAn} For all $n \geq 1$,
\[ | \A_{3n} | = \sum_{k=1}^n M_{k-1} \sum_{X \in \mathcal{P}_{n,k}} C_{x_1}C_{x_2}\cdots C_{x_k}\]
where $\mathcal{P}_{n,k}$ is the set of compositions of $n$ of length $k$. Furthermore, the generating function for $| \A_{3n}|$ is given by
\[ A(x) = (c(x)- 1)m(c(x) - 1) \]
where $c(x)$ is the generating function for the Catalan numbers and $m(x)$ is the generating function for the Motzkin numbers.
\end{prop}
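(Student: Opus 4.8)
The plan is to fuse the three preceding lemmas into one count and then push that count through to generating functions.

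First I would establish the combinatorial identity. By Definition~\ref{def:T}, every $\pi \in \A_{3n}$ determines a unique Dyck word $W(\pi)$ of length $2n$ on 1s and 2s, so if $N(W)$ denotes the number of $\pi \in \A_{3n}$ with $W(\pi) = W$, then $|\A_{3n}| = \sum_W N(W)$, the sum ranging over all such Dyck words. Lemma~\ref{CatType} evaluates each summand: if $\Type(W) = (x_1,\ldots,x_k)$ then $N(W) = C_{x_1}\cdots C_{x_k}$. Since the type of any such $W$ is a composition of $n$, I would regroup the sum according to type. For a fixed composition $X = (x_1,\ldots,x_k) \in \mathcal{P}_{n,k}$, Lemma~\ref{MotType} supplies exactly $M_{k-1}$ Dyck words of type $X$, each contributing the same product $C_{x_1}\cdots C_{x_k}$. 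Summing over all types, organized by their length $k$ running from $1$ to $n$, produces
\[
|\A_{3n}| = \sum_{k=1}^n \sum_{X \in \mathcal{P}_{n,k}} M_{k-1}\, C_{x_1}C_{x_2}\cdots C_{x_k} = \sum_{k=1}^n M_{k-1} \sum_{X \in \mathcal{P}_{n,k}} C_{x_1}C_{x_2}\cdots C_{x_k},
\]
which is the first assertion.

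Next I would translate this into generating functions. Writing $A(x) = \sum_{n\ge 1} |\A_{3n}|\, x^n$ and inserting the formula above, I would interchange the order of summation to sum first over $k \ge 1$ and then over all $n \ge k$ together with all compositions of $n$ into $k$ positive parts. The decisive observation is that a composition of $n$ into $k$ positive parts is nothing but an ordered choice of $k$ positive integers summing to $n$, so the contribution of the $k$ independent parts factors:
\[
\sum_{n \ge k}\ \sum_{X \in \mathcal{P}_{n,k}} C_{x_1}\cdots C_{x_k}\, x^n = \left( \sum_{j \ge 1} C_j x^j \right)^{k}.
\]
Because each part is at least $1$, the inner series is $\sum_{j\ge 1} C_j x^j = c(x) - 1$ (the Catalan term $C_0 = 1$ is excluded). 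Setting $D(x) = c(x) - 1$, this gives $A(x) = \sum_{k \ge 1} M_{k-1}\, D(x)^{k}$.

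To finish, I would factor out one copy of $D(x)$ and reindex with $\ell = k-1$, obtaining $A(x) = D(x) \sum_{\ell \ge 0} M_\ell\, D(x)^{\ell} = D(x)\, m(D(x))$, which is precisely $(c(x)-1)\,m(c(x)-1)$. I would note that the substitution $m(c(x)-1)$ is a well-defined formal power series because $D(x) = c(x)-1$ has zero constant term (as $C_0 = 1$), so each coefficient of the composite receives only finitely many contributions. The only point that genuinely demands care—and the step I expect to be the main obstacle—is the bookkeeping of the two index shifts: the Motzkin subscript $k-1$ versus the number of parts $k$, and the forced passage from $c(x)$ to $c(x)-1$ because compositions have strictly positive parts. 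Beyond confirming that the interchange of summations is a valid identity of formal power series, the argument is a direct consequence of Lemmas~\ref{PiConditions}, \ref{MotType}, and \ref{CatType}.
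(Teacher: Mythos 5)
Your proposal is correct and follows essentially the same route as the paper: group the permutations in $\A_{3n}$ by their associated Dyck word, apply Lemma~\ref{CatType} to each word and Lemma~\ref{MotType} to count words of each type, and then recognize $\sum_{k\ge 1} M_{k-1}[c(x)-1]^k$ as $(c(x)-1)\,m(c(x)-1)$. Your added remarks on the validity of the formal substitution (since $c(x)-1$ has zero constant term) and the index bookkeeping are sound but not a departure from the paper's argument.
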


\begin{proof} Let $X$ be a composition of $n$ of length $k$. By Lemma \ref{MotType},  there are $M_{k-1}$ Dyck words of type $X$. For each Dyck word of type $X$, there are  $C_{x_1}C_{x_2}\cdots C_{x_k}$ permutations in $\A_{3n}$ by Lemma \ref{CatType}. Because every permutation in $\A_{3n}$ is associated with a Dyck word, summing over all compositions yields the desired formula.

Now, we note that 
\[ (c(x)- 1)m(c(x) - 1) = \sum_{k=1}^\infty M_{k-1} [c(x)-1]^k.\]
However, the coefficient of $x^n$ in $ [c(x)-1]^k$ is simply
\[ \sum_{X \in \mathcal{P}_{n,k}} C_{x_1}C_{x_2}\cdots C_{x_k} \]
and thus the generating function for $\A_{3n}$ is as given. \end{proof}

Up to this point, we have been considering permutations in  $\Av_{3n}^\star(132)$ where all 3-cycles are of the form 312. By considering inverses, similar results hold for permutations in  $\Av_{3n}^\star(132)$ where all 3-cycles are of the form 231. 
We now consider permutations in  $\Av_{3n}^\star(132)$ that contain cycles of both the form 312 and 231. Suppose $\pi \in \Av_{6}^\star(132)$ where $\pi$ consists of one cycle of the form 312 and one cycle of the form 231.  Further suppose that the 312-cycle contains the element 1. It is easy to check that there is only one possibility for $\pi$, namely $\pi=634215$. The cycle of the form 312  is ``615" while the cycle of the form 231 is ``342."  Of course, by considering inverses, there is also only one possibility for $\pi$ if the element 1 is in the cycle of the form 231, namely $\pi = 542361.$

Thus, if a permutation in $\Av_{3n}^\star(132)$ contains a cycle of the form 312 $\alpha = \alpha_3\alpha_1\alpha_2$ and a cycle of the form 231 $\beta=\beta_2\beta_3\beta_1$, these cycles must either be in the relative order
\[ \alpha_3 \beta \alpha_1 \alpha_2 \quad \text{or} \quad \beta_2 \alpha \beta_3 \beta_1.\]
Furthermore, Lemma \ref{PiConditions} states that if a permutation in $\Av_{3n}^\star(132)$ has only cycle of the form 312, the first $n$ elements will be the ``3" from $n$ different cycles of the form 312.  By considering inverses, an analogous result is that if a permutation in $\Av_{3n}^\star(132)$ has only cycle of the form 231, the first $n$ elements will be the ``2" from $n$ different cycles of the form 231. We thus have the following lemma.

\begin{lem}
Let $\pi = \pi_1\pi_2\cdots \pi_{3n} \in \Av_{3n}^\star(132)$. Then $\pi_1, \pi_2, \ldots, \pi_n$ all belong to a different 3-cycle.
\end{lem}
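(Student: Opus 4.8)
The statement says that in any $\pi\in\Av_{3n}^\star(132)$, the first $n$ positions $\pi_1,\ldots,\pi_n$ all lie in distinct 3-cycles. Since $\pi$ has exactly $n$ cycles, this is equivalent to saying each 3-cycle contributes exactly one element to the first $n$ positions. The approach is to leverage the structural results already established for the two ``pure'' cases (all cycles of form 312, all of form 231) and the interleaving constraint derived just before the lemma.

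The key observation is that each 3-cycle, read in one-line notation, realizes either a 312 pattern or a 231 pattern. For a 312-cycle $\alpha=\alpha_3\alpha_1\alpha_2$, the element occupying the earliest position is $\alpha_3$, the ``3'' (largest) of the cycle; for a 231-cycle $\beta=\beta_2\beta_3\beta_1$, the earliest-positioned element is $\beta_2$, the ``2'' (middle) of the cycle. So the claim amounts to: exactly $n$ of the $3n$ elements are ``cycle-leaders'' (the earliest-positioned element of their own cycle), and these $n$ leaders occupy precisely positions $1$ through $n$.

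First I would show that a 3-cycle's leader cannot be followed by a strictly smaller element of a \emph{different} cycle that itself precedes an element between them in value — i.e. I would rule out configurations creating a 132. Concretely, suppose two cycles $c$ and $c'$ have their leaders $p$ (of $c$) and $q$ (of $c'$) with $p$ before $q$; I would use the interleaving restriction stated above (that any 312-cycle and 231-cycle appear either as $\alpha_3\,\beta\,\alpha_1\alpha_2$ or $\beta_2\,\alpha\,\beta_3\beta_1$, and analogously same-type cycles nest or concatenate as in Lemma~\ref{PiConditions}) to constrain where the non-leader elements of $c$ sit relative to $q$. The goal is to prove that all non-leader elements of any cycle appear \emph{after} position $n$: if some non-leader element sat in one of the first $n$ positions, then by pigeonhole some cycle would contribute two elements among the first $n$ and another cycle zero, forcing a leader to appear at a position beyond $n$ while two smaller companions of an earlier cycle straddle it, which I would show yields a 132 pattern exactly as in the contradiction arguments of Lemma~\ref{PiConditions}\ref{item:T - 1}.

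The cleanest route is a counting-plus-contradiction argument: since there are $n$ cycles and each has a unique leader, there are exactly $n$ leaders total, so it suffices to prove every leader occupies one of the first $n$ positions (equivalently, no non-leader occupies the first $n$ positions). I would establish this by the following dichotomy on the leftmost non-leader, if one existed in positions $1,\ldots,n$: it is either the ``1'' or a later element of its cycle, and in both the 312 and 231 cases, the presence of its own cycle-leader somewhere to its left (by definition of leader) together with a third, intermediate-valued element from another cycle forced into the first $n$ positions produces a 132 occurrence, contradicting $\pi\in\Av_{3n}^\star(132)$. The main obstacle is the bookkeeping across the \emph{mixed} case: the two pure cases follow directly from Lemma~\ref{PiConditions} and its inverse analogue, but when both cycle-forms are present one must combine the interleaving patterns $\alpha_3\beta\alpha_1\alpha_2$ and $\beta_2\alpha\beta_3\beta_1$ with the monotonicity of leaders to verify no 132 sneaks in; handling all relative positions of a 312-cycle's leader against a 231-cycle's non-leaders (and vice versa) is where the care is needed, and I expect that to be the crux of the proof.
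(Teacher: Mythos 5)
Your strategy is essentially the paper's: the paper derives this lemma from exactly the ingredients you cite, namely Lemma~\ref{PiConditions}\ref{item:T - 1} (and its inverse analogue) for same-type cycles together with the two allowed interleavings $\alpha_3\beta\alpha_1\alpha_2$ and $\beta_2\alpha\beta_3\beta_1$ for a mixed pair, and your reduction via cycle-leaders and pigeonhole is sound. The one step that would not go through as written is the finish you yourself flag as the crux: you propose to exhibit a 132 using ``a third, intermediate-valued element from another cycle,'' but the values of one 3-cycle are not a priori constrained relative to another's, so that intermediate-valued witness need not exist and the direct 132 hunt stalls. The clean way to close it, using only what you already have, is to restrict to two cycles: if some cycle $c$ contributed two elements to positions $1,\ldots,n$, pigeonhole gives a cycle $c'$ contributing none, so all of $c'$ sits in positions $>n$; since a union of cycles is $\pi$-invariant, the subsequence of $\pi$ on the positions of $c\cup c'$ rescales to an element of $\Av_6^\star(132)$ composed of two 3-cycles whose first two letters both come from $c$. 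That is impossible, because in every element of $\Av_6^\star(132)$ the first two entries lie in different cycles --- for the three elements of $\A_6$ this is Lemma~\ref{PiConditions}\ref{item:T - 1}, for the three permutations with both cycles of form 231 it follows by taking inverses, and for the two mixed permutations $634215$ and $542361$ one checks it directly. This pairwise fact says that the two leaders of any two cycles precede all four non-leaders, which globalizes immediately: every leader precedes every non-leader, so the leaders are exactly $\pi_1,\ldots,\pi_n$.
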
 

To count the number of permutations in $\Av_{3n}^\star(132)$, we need only see the cycle type of the first $n$ elements. We are now ready for the main result of this section, the enumeration of $\Av_{3n}^\star(132)$.

\begin{thm} \label{Thm132} For all $n \geq 1$,
\[ |\Av_{3n}^\star(132)| = 2 \sum_{{\mathbf{x}} \in \mathcal{P}_{n,k}}  a_{x_1}a_{x_2}\cdots a_{x_k}\]
where  $\mathcal{P}_{n,k}$ is the set of compositions of $n$ of length $k$ and $a_m = | \A_{3m} |.$ Furthermore, the generating function for $|\Av_{3n}^\star(132)|$ is
\[ B(x) = \frac{2A(x)}{1-A(x)}\] where \[ A(x) = (c(x)- 1)m(c(x) - 1), \] and 
 $c(x)$ and $m(x)$  are the generating functions for the Catalan numbers and Motzkin numbers, respectively.
\end{thm}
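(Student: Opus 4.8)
The plan is to refine the cycle-type structure already developed into a bijection between $\Av_{3n}^\star(132)$ and pairs consisting of a choice of ``starting type'' together with a sequence of $\A$-type blocks, and then to read off both the explicit formula and the generating function from this bijection. By the preceding lemma, the first $n$ entries $\pi_1, \ldots, \pi_n$ lie in $n$ distinct $3$-cycles, so each $\pi_i$ records the type ($312$ or $231$) of its cycle; reading these in order produces a type word $t_1 t_2 \cdots t_n \in \{312, 231\}^n$. I would decompose this word into its maximal runs of equal letters: if the run lengths are $x_1, x_2, \ldots, x_k$, then $(x_1, \ldots, x_k)$ is a composition of $n$, consecutive runs necessarily have opposite type, and the entire word is recovered from the composition together with the single letter $t_1$. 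This already isolates the factor $2$ (the two choices of $t_1$) and the outer sum over compositions.

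The heart of the argument is a structural lemma asserting that $\pi$ is recovered uniquely from $t_1$ together with an independently chosen element of $\A_{3x_i}$ for each block, and that every such choice is realizable. Using the interleaving constraints established above --- namely that a $312$-cycle $\alpha$ and a $231$-cycle $\beta$ may occur only in the relative orders $\alpha_3 \beta \alpha_1 \alpha_2$ or $\beta_2 \alpha \beta_3 \beta_1$ --- I would show that the $i$-th block occupies a contiguous interval of positions and a contiguous interval of values, nested inside the $(i-1)$-st block, and that on its own position/value intervals a $312$-block is order-isomorphic to a permutation in $\A_{3x_i}$ (and a $231$-block to the inverse of such a permutation, which is counted by the same number $a_{x_i} = |\A_{3x_i}|$). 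Two things must be verified: within a block, $132$-avoidance together with constant cycle type is precisely the defining condition of $\A_{3x_i}$, so each block contributes $a_{x_i}$ possibilities; and across blocks, the nested placement creates no new $132$ pattern regardless of the internal choices, so the choices in distinct blocks are independent. Both reduce, via the value/position separation of nested blocks, to the two-cycle interleaving analysis already carried out, lifted from single cycles to blocks.

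Granting this bijection, the count for a fixed starting type and fixed composition $(x_1, \ldots, x_k)$ is the product $a_{x_1} a_{x_2} \cdots a_{x_k}$; summing over all compositions of $n$ and multiplying by $2$ yields the stated formula. For the generating function I would use $A(x) = \sum_{m \ge 1} a_m x^m$ together with the identity $[x^n] A(x)^k = \sum_{\mathbf{x} \in \mathcal{P}_{n,k}} a_{x_1} \cdots a_{x_k}$; since the blocks form a nonempty sequence, summing over $k \ge 1$ gives
\[
B(x) = 2 \sum_{k \ge 1} A(x)^k = \frac{2 A(x)}{1 - A(x)},
\]
and substituting $A(x) = (c(x) - 1)\, m(c(x) - 1)$ from Proposition~\ref{CountAn} completes the proof.

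I expect the main obstacle to be the structural lemma, and specifically the independence across blocks: one must pin down the exact position and value intervals of each nested block and check that no choice of internal $\A$-data in one block can combine with internal data in another to form a forbidden $132$. The cleanest route is probably an induction that peels off the outermost block, thereby reducing the cross-block interaction to the already-understood interleaving of a single $312$-cycle with a single $231$-cycle.
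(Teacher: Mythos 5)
Your proposal follows essentially the same route as the paper: use the lemma that $\pi_1,\ldots,\pi_n$ lie in distinct $3$-cycles to read off a type word, break it into maximal runs giving a composition of $n$ (with the factor $2$ from the choice of first type), count each block by $a_{x_i}$ via the nesting forced by the $\alpha_3\beta\alpha_1\alpha_2$ / $\beta_2\alpha\beta_3\beta_1$ interleaving, and sum the geometric series $2\sum_{k\ge 1}A(x)^k$ for the generating function. In fact the paper's own proof is terser than yours --- it simply asserts that each block of length $x_i$ contributes $a_{x_i}$ possibilities --- so the structural verification you flag as the main obstacle is exactly the step the authors leave implicit.
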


\begin{proof} For any composition of $X = (x_1, x_2, \ldots, x_k)$ of $n$, we consider a permutation $\pi \in \Av_{3n}^\star(132)$ where the the first $x_1$ elements of $\pi$ are in 312-cycles, the next $x_2$ elements of $\pi$ are in 213-cycles, etc. For each of these blocks of length $x_i$, there are $a_{x_i}$ possible permutations.  If the first $x_1$ elements are in 213-cycles, we get a similar result, and summing over all compositions gives the desired formula. The generating function follows immediately.
\end{proof}



\section{Avoiding 321}\label{321}

In this section we consider the set $\Av_{3n}^\star(321)$. For a permutation $\pi \in \Av_{3n}^\star(321)$, each of the $n$ 3-cycles corresponds to either a 231 pattern or a 312 pattern. We begin by studying and enumerating those permutations in $\Av_{3n}^\star(321)$ where all $n$ 3-cycles are of the form 312. To this end, let us denote by $\B_{3n} \subseteq \Av_{3n}^\star(321)$ the set of permutations on $3n$ elements that avoid the pattern 321 and contain only 3-cycles of the form 312. Similar to Definition \ref{def:T}, for each $\pi\in\B_{3n}$, we define the sets $T_1, T_2, T_3$, and the word $W$ as below. 

\begin{defn} \label{def:TW} Let $\pi = \pi_1\pi_2\cdots \pi_{3n} \in \B_{3n}$.
\begin{itemize}
\item Define the sets
\begin{align*}
&T_1(\pi) = \{x_1 < x_2 < \cdots < x_n\},\\
&T_2(\pi) = \{y_1 < y_2 <  \cdots < y_n\},  \text{ and}\\
&T_3(\pi) = \{z_1 < z_2 < \cdots < z_n\},
\end{align*} (or simply $T_1, T_2, T_3$) to be a partition of $[3n]$ where $\pi_j \in T_i$ if $\pi_j$ is the ``$i$" in the 312 pattern realized by its cycle, that is, if $\pi_j$ is the $i^{\text{th}}$ smallest number in its cycle. 
\item Define $W(\pi) = w_1w_2\cdots w_{3n}$ to be a word where $w_i = x$ if $\pi_i\in T_1$, $w_i = y$ if $\pi_i \in T_2$, and $w_i =z$ if $\pi_i \in T_3$. 
\end{itemize}
\end{defn}

Suppose $\pi \in \B_6$. It is easy to check that there are only three possibilities for $\pi$, namely 
\begin{align*}
\pi &= 561234,\\
\pi &= 416235, \text{ or}\\
\pi &= 312645.
\end{align*}
Note that in all three cases, $\pi$ can be written in its cycle notation as $\pi = (z_1, x_1, y_1)(z_2, x_2, y_2)$. For a general permutation $\pi \in \B_{3n}$, any two 3-cycles must be in the same relative order as the three possibilities in $\B_6$. Thus, we state the following observation.

\begin{obs} \label{obs321}
If $\pi \in \B_{3n}$, and $T_1, T_2,$ and $T_3$ are as defined in Definition \ref{def:TW}, then $\pi$ can be written as\[ \pi = \prod_{i=1}^{n} (x_i, z_i, y_i). \]
\end{obs}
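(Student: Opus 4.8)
The plan is to first pin down what a single cycle looks like locally, and then to use the standard monotonicity of $321$-avoiding permutations to show that the cycles glue the sorted sets $T_1,T_2,T_3$ together in lockstep. The first step is the observation that if a cycle of $\pi$ has form $312$ and consists of the elements $a<b<c$, then reading the values in positions $a<b<c$ must produce the pattern $312$; this forces $\pi(a)=c$, $\pi(b)=a$, $\pi(c)=b$, i.e. the cycle is $(a,c,b)$. In particular every element of $T_1$ is an excedance of $\pi$ (a position $p$ with $\pi(p)>p$), while every element of $T_2\cup T_3$ is a deficiency ($\pi(p)<p$). Since $\pi$ is the product of these cycles, the genuine content of the statement is only that the $i^{\text{th}}$ smallest minimum, middle, and maximum all lie in one common cycle; equivalently, it suffices to prove $\pi(x_i)=z_i$ and $\pi(z_i)=y_i$ for every $i$.

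The key tool is that in any $321$-avoiding permutation the values carried by the excedances are increasing in position order, and likewise the values carried by the deficiencies are increasing in position order. I would prove both by a short pigeonhole/contradiction argument. If two excedances $p<q$ had $\pi(p)>\pi(q)$, then the $n-q+1$ positions that are $\geq q$ cannot all carry values $\geq \pi(q)$ (there are only $n-\pi(q)+1<n-q+1$ of those, using $\pi(q)>q$), so some $r>q$ has $\pi(r)<\pi(q)$ and $(p,q,r)$ is a $321$ pattern. Dually, if two deficiencies $p<q$ had $\pi(p)>\pi(q)$, then the $p$ positions that are $\leq p$ cannot all carry values $\leq \pi(p)$ (there are only $\pi(p)<p$ of those), so some $\ell<p$ has $\pi(\ell)>\pi(p)$ and $(\ell,p,q)$ is a $321$ pattern.

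Finally I would deduce the pairing. The set $T_1=\{x_1<\cdots<x_n\}$ is exactly the set of excedances listed in increasing order, so the excedance-monotonicity gives $\pi(x_1)<\cdots<\pi(x_n)$; but $\{\pi(x_1),\ldots,\pi(x_n)\}$ is precisely the set of cycle maxima, namely $T_3$, and an increasing enumeration of $T_3$ is $z_1<\cdots<z_n$, so $\pi(x_i)=z_i$. Thus $x_i$ and $z_i$ share a cycle. Since $T_3=\{z_1<\cdots<z_n\}$ is a subset of the deficiencies, the deficiency-monotonicity (a subsequence of an increasing sequence is increasing) gives $\pi(z_1)<\cdots<\pi(z_n)$; these values are exactly the cycle middles, namely $T_2$, so $\pi(z_i)=y_i$. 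Hence the cycle through $x_i$ has maximum $z_i$ and sends $z_i\mapsto y_i$, so it is $(x_i,z_i,y_i)$, and $\pi=\prod_{i=1}^{n}(x_i,z_i,y_i)$. The main obstacle is the monotonicity step of the second paragraph; once it is in hand, identifying the image sets with the sorted copies of $T_3$ and $T_2$ is a routine cardinality argument.
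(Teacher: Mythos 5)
Your proof is correct, but it takes a genuinely different route from the paper. The paper treats this as an observation: it lists the three permutations in $\B_6$ explicitly, checks that each factors as claimed, and then argues that any two 3-cycles of a general $\pi\in\B_{3n}$, restricted to their union and rescaled order-isomorphically, yield an element of $\B_6$ (the union of two cycles is $\pi$-invariant, and both the cycle structure and 321-avoidance survive the rescaling), so every pair of cycles interleaves in one of the three allowed ways and the minima, middles, and maxima pair up in sorted order globally. You instead invoke, and prove from scratch, the classical structure theorem for 321-avoiding permutations: the values at excedance positions form an increasing subsequence, as do the values at deficiency positions. Your pigeonhole arguments for both halves are sound (your $n$ should be read as $3n$, the length of the permutation), the identification of $T_1$ with the excedances and $T_2\cup T_3$ with the deficiencies is correct since each cycle $\{a<b<c\}$ of form $312$ must be $(a,c,b)$, and the closing sorting argument --- $\pi(x_1)<\cdots<\pi(x_n)$ enumerates $T_3$ in increasing order, hence $\pi(x_i)=z_i$, and then $\pi(z_i)=y_i$ by restricting the deficiency subsequence to $T_3$ --- is a clean cardinality argument. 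Your version is longer but self-contained and makes the role of 321-avoidance completely explicit; the paper's two-cycle reduction is shorter but leaves the rescaling step and the passage from ``every pair of cycles is nested or aligned'' to the global product formula implicit.
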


We can now state some properties of $T_1(\pi), T_2(\pi),$ and $T_3(\pi)$.


\begin{lem}\label{lem:Lem321}
Let $\pi = \pi_1\pi_2\cdots \pi_{3n} \in \B_{3n}$ and $T_1(\pi),$ $T_2(\pi)$, and $T_3(\pi)$ be as defined in Definition~\ref{def:TW}.
Then, for $i \in [n]$ and $j \in [n-1]$,
\begin{enumerate}[label=(\alph*)]
\item\label{item:a} $x_i < y_i < z_i$,
\item\label{item:b} $x_i \leq 3i - 2$, and
\item\label{item:c} $x_j < y_i < x_{j+1}$ if and only if $y_j < z_i < y_{j+1}$. (Equivalently, in the word $W(\pi)$, the $i^{\text{th}}$ $x$ appears between the $j^{\text{th}}$ and $(j+1)^{\text{st}}$ $z$ if and only if the $i^{\text{th}}$ $y$ appears between the $j^{\text{th}}$ and $(j+1)^{\text{st}}$ $x$.)
\end{enumerate}
Conversely, given sets $T_1$, $T_2$, and $T_3$ satisfying conditions \ref{item:a}, \ref{item:b}, and \ref{item:c}, there exists a unique permutation 
$\pi \in \B_{3n}$ with $T_1(\pi) = T_1$, $T_2(\pi)=T_2$, and $T_3(\pi) = T_3$, namely, 
\[ \pi = \prod_{i=1}^{n} (x_i, z_i, y_i). \] 
\end{lem}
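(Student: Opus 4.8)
The plan is to establish the three forward conditions in order of increasing difficulty and then read off the converse, using throughout the decomposition $\pi=\prod_{i=1}^{n}(x_i,z_i,y_i)$ of Observation~\ref{obs321}, which guarantees that the $i$th cycle is exactly $\{x_i,y_i,z_i\}$. Condition~\ref{item:a} is then immediate: $x_i$, $y_i$, and $z_i$ are respectively the smallest, middle, and largest elements of this one cycle, so $x_i<y_i<z_i$. For \ref{item:b} I would argue by counting, using only \ref{item:a}. Fixing $i$ and looking at $\{1,\dots,x_i\}$, this segment contains exactly the $i$ elements $x_1<\dots<x_i$ of $T_1$, and any element of $T_2\cup T_3$ lying in it is a $y_k$ or $z_k$ with $y_k\le x_i$ or $z_k\le x_i$; by \ref{item:a} this forces $x_k<x_i$, hence $k\le i-1$. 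As the $i$th cycle contributes nothing (since $y_i,z_i>x_i$) and each earlier cycle contributes at most two elements, at most $2(i-1)$ elements of $T_2\cup T_3$ fall below $x_i$, giving $x_i-i\le 2(i-1)$, that is $x_i\le 3i-2$. In particular \ref{item:b} follows from \ref{item:a} alone, so in the converse it imposes no new constraint.

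The heart of the matter is \ref{item:c}, which is where $321$-avoidance is used. I would view $\pi$ as the merge of three point families: the excedances $A=\{(x_i,z_i)\}$ and the two deficiency families $B=\{(y_i,x_i)\}$ and $C=\{(z_i,y_i)\}$, each of which is increasing in both coordinates. Because $\pi$ has no fixed points, a $321$ pattern is a strictly decreasing triple of points, and since each family is increasing, such a triple must take one point from each of $A$, $B$, $C$. As $A$ is automatically increasing, the only possible obstruction is an inversion between a $B$-point $(y_b,x_b)$ and a $C$-point $(z_c,y_c)$. The key step is to show such an inversion always completes to a genuine $321$: if $y_b<z_c$ (forcing $x_b>y_c$) then $(x_b,z_b),(y_b,x_b),(z_c,y_c)$ is a decreasing triple, while if $y_b>z_c$ (forcing $y_c>x_b$) then $(x_c,z_c),(z_c,y_c),(y_b,x_b)$ is one. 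Conversely, if $B\cup C$ has no inversion then $A$ and $B\cup C$ are two increasing subsequences covering $\pi$, so $\pi$ avoids $321$. Hence $\pi$ avoids $321$ if and only if $B\cup C$ is increasing, which happens exactly when $y_b<z_c\iff x_b<y_c$ for all $b,c$. Fixing $c$, both $\{b:y_b<z_c\}$ and $\{b:x_b<y_c\}$ are initial segments of indices, so this biconditional holds for all $b$ precisely when $\#\{j:y_j<z_i\}=\#\{j:x_j<y_i\}$; since ``$x_j<y_i<x_{j+1}$'' and ``$y_j<z_i<y_{j+1}$'' say that exactly $j$ of the $x$'s lie below $y_i$, respectively that exactly $j$ of the $y$'s lie below $z_i$, this equality of counts is exactly the biconditional in \ref{item:c} (with the top value handled along with the rest).

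For the converse and the uniqueness claim, uniqueness is immediate from Observation~\ref{obs321}: a permutation in $\B_{3n}$ is completely determined by its triple $(T_1,T_2,T_3)$ via $\pi=\prod(x_i,z_i,y_i)$. For existence, given a partition $T_1,T_2,T_3$ of $[3n]$ satisfying \ref{item:a}--\ref{item:c}, I would simply define $\pi=\prod_{i=1}^{n}(x_i,z_i,y_i)$. Condition~\ref{item:a} makes each cycle realize the pattern $312$, so $\pi\in\S_{3n}^\star$ with all cycles of form $312$ and with $T_i(\pi)=T_i$; condition~\ref{item:c}, through the equivalence just established, makes $B\cup C$ increasing and hence forces $\pi$ to avoid $321$. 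Therefore $\pi\in\B_{3n}$, completing the proof.

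I expect the main obstacle to be the middle step of \ref{item:c}: correctly reformulating $321$-avoidance through the three monotone families and, in particular, verifying that every inversion between the two deficiency families can be completed to a full $321$ by exhibiting the right excedance point. Once that is in place, translating ``$B\cup C$ increasing'' into the indexed biconditional of \ref{item:c} is a routine down-set counting argument, and the remaining conditions require no new ideas.
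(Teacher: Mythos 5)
Your proof is correct, but it reorganizes the key step around a different structural lemma than the paper uses. For part \ref{item:c}, the paper argues all four directions by directly exhibiting a specific 321 pattern in each failure case (e.g.\ $z_iy_ix_j$ when $z_i<y_j$, or $z_{j+1}x_{j+1}y_i$ when $z_i>y_{j+1}$); you instead split $\pi$ into the three increasing point-families $A=\{(x_i,z_i)\}$, $B=\{(y_i,x_i)\}$, $C=\{(z_i,y_i)\}$ and prove the clean equivalence ``$\pi$ avoids $321$ iff $B\cup C$ has no inversion,'' with \ref{item:c} falling out as the count-of-initial-segments translation of that statement. The concrete 321 patterns you exhibit when completing a $B$--$C$ inversion are essentially the same ones the paper finds, so the combinatorial core is shared, but your framing buys two things the paper's write-up does not make explicit: it proves the converse honestly (the paper only says ``the converse follows from Observation~\ref{obs321},'' which gives the form of $\pi$ and its uniqueness but not the fact that the constructed permutation avoids $321$ --- your ``union of two increasing subsequences'' argument supplies exactly that missing piece), and it shows that \ref{item:b} is actually a consequence of \ref{item:a} together with the partition structure, so it imposes no independent constraint in the converse. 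Your counting proof of \ref{item:b} is also a value-side rephrasing of the paper's position-count (``at most $3(i-1)$ elements before $\pi_{x_i}$''); both are fine. The only cosmetic wrinkle is the boundary case $j=n$ in \ref{item:c}, which you correctly dispose of by noting both counts lie in $\{1,\dots,n\}$ so equality for $j\in[n-1]$ forces equality throughout.
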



\begin{proof}
Since each cycle of $\pi$ is of the form 312, \ref{item:a} holds. For $i \in [n]$, there are at most $3(i-1)$ elements before $\pi_{x_i}$, so $x_i \leq 3i - 2$ and \ref{item:b} is true.

To prove part \ref{item:c}, we let $i \in [n]$ and pick $j \in [n-1]$ with $x_j < y_i < x_{j+1}$. First, assume toward a contradiction that $z_i < y_j$. In this case, the subsequence $z_iy_ix_j$ occurs in $\pi$ since the positions of the subsequence are $x_i$, $z_i$, and $y_j$. This subsequence is a 321-pattern since $z_i > y_i$ by part \ref{item:a}. Now assume toward a contradiction that  $z_i > y_{j+1}$. In this case, the subsequence $z_{j+1}x_{j+1}y_i$ is a 321-pattern.

For the other direction of part \ref{item:c}, we start by letting $i \in [n]$ and pick $j \in [n-1]$ with $y_j < z_i < y_{j+1}$. Then, assume toward a contradiction that $y_i < x_j$. In this case, $z_jx_jy_i$ is a 321-pattern. If we assume toward a contradiction that $y_i > x_{j+1}$, then $z_iy_ix_{j+1}$ is a 321-pattern. Thus part \ref{item:c} holds.  Finally, the converse follows from Observation \ref{obs321}.

\end{proof}

We illustrate Definition \ref{def:TW} and  Lemma~\ref{lem:Lem321} with the following example.
\begin{ex}\label{ex:Ex321}
Let 
\begin{align*}
\pi& = 7\ 8\ 1\ 2\ 11\ 12\ 3\ 4\ 5\ 6\ 9\ 10\ 16\ 13\ 18\ 14\ 15\ 17\\&= (1,7,3)(2,8,4)(5,11,9)(6,12,10)(13,16,14)(15,18,17).
\end{align*}
Then
\begin{align*}
 T_1 &= \{1,2,5,6,13,15\},\\
T_2 &= \{3,4,9,10,14,17\},\\
  T_3 &=\{7,8,11,12,16,18\},
\end{align*}
and $W(\pi) = zzxxzzyyxxyyzxzyxy$.

Parts \ref{item:a} and \ref{item:b} of Lemma \ref{lem:Lem321} are straightforward to see. To illustrate part \ref{item:c}, 
notice that in the word $W(\pi)$, the $1^{\text{st}}$ and $2^{\text{nd}}$ $x$ are between the $2^{\text{nd}}$ and $3^{\text{rd}}$ $z$ and therefore the $1^{\text{st}}$ and $2^{\text{nd}}$ $y$ are between the $2^{\text{nd}}$ and $3^{\text{rd}}$ $x$.
\end{ex}

It turns out that all three sets are not needed to determine a unique permutation.  In fact, given any set $T$ of $n$ positive integers satisfying only condition \ref{item:b} in Lemma \ref{lem:Lem321}, there exists a unique corresponding permutation in $\B_{3n}$.

\begin{defn}
Given word $W=w_1\cdots w_{k}$, let 
\begin{align*}
X(W) &= \text{the number of } x\text{'s in } W \text{, and}\\
Y(W) &= \text{the number of } y\text{'s in } W.
\end{align*}
\end{defn}

\begin{lem}\label{FlatsDetermine}
Given a set $T = \{t_1 < t_2 < \cdots < t_n\}$ with $t_i \leq 3i - 2$, there is a unique permutation $\pi \in \B_{3n}$ with $T_1(\pi) = T$.
\end{lem}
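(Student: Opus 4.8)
The plan is to reduce the statement to a combinatorial claim about the word $W=W(\pi)$ and then settle that claim by a forced left-to-right construction. By Lemma~\ref{lem:Lem321}, a permutation in $\B_{3n}$ is exactly the data of a partition $[3n]=T_1\sqcup T_2\sqcup T_3$ satisfying \ref{item:a}, \ref{item:b}, and \ref{item:c}, and condition \ref{item:b} constrains only $T_1$. Hence it suffices to show that a set $T=\{t_1<\cdots<t_n\}$ with $t_i\le 3i-2$ completes in exactly one way to such a partition with $T_1=T$. To set this up, I would first record how $W$ sees the three sets: using Observation~\ref{obs321} (so that $\pi(x_i)=z_i$, $\pi(z_i)=y_i$, and $\pi(y_i)=x_i$), the letters $z$, $x$, $y$ of $W$ occupy exactly the positions in $T_1$, $T_2$, $T_3$, respectively. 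In particular, knowing $T_1=T$ is the same as knowing the positions of the $z$'s in $W$, and condition \ref{item:b} says the $i$-th $z$ sits in a position at most $3i-2$.

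Next I would rephrase conditions \ref{item:a} and \ref{item:c} purely in terms of $W$. Writing $a_i$ for the number of $z$'s preceding the $i$-th $x$ and $b_i$ for the number of $x$'s preceding the $i$-th $y$, condition \ref{item:a} is equivalent to $a_i\ge i$ and $b_i\ge i$ for all $i$, i.e.\ to the ballot condition that every prefix of $W$ has at least as many $z$'s as $x$'s and at least as many $x$'s as $y$'s. Condition \ref{item:c} is exactly $a_i=b_i$ for all $i$. Since a two-letter Dyck word is determined by the list recording, for each of its closing letters, how many opening letters precede it, the equality $a_i=b_i$ says precisely that the subword of $W$ on $\{z,x\}$ and the subword of $W$ on $\{x,y\}$ are the same Dyck word. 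Thus, given the positions of the $z$'s, I must show there is a unique way to fill the remaining $2n$ positions with $n$ $x$'s and $n$ $y$'s so that the $(z,x)$-subword equals the $(x,y)$-subword.

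I would produce this filling by the forced procedure dictated by $a_i=b_i$: scanning the non-$z$ positions from left to right, the $(Y+1)$-st $y$ must be placed precisely at the first non-$z$ position after the running count of $x$'s reaches $a_{Y+1}$ (the opening count already determined for the $(Y+1)$-st $x$), and an $x$ must be placed otherwise. At each step this is the only choice compatible with $a_i=b_i$ together with the ballot condition, so if a valid $W$ exists it is unique; this yields the injectivity half. For instance, for $T=\{1,2,5\}$ every step is forced and one obtains the single valid word $W=zzxxzyyxy$.

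The step I expect to be the main obstacle is existence: verifying that this forced scan never stalls and terminates with exactly $n$ $x$'s and $n$ $y$'s. Here condition \ref{item:b} is precisely what is needed, and I would use its equivalent prefix form, namely that the first $t$ positions always contain at least $\lceil t/3\rceil$ letters $z$ (the binding case $t=t_{i+1}-1$ recovers $t_{i+1}\le 3(i+1)-2$). A stall would occur at a non-$z$ position only if one could neither place an $x$ (because the count of $x$'s already equals the count of $z$'s) nor a $y$ (because no $y$ is yet due); the prefix inequality rules this out by guaranteeing enough $z$'s have appeared, and it also forces that by the time all $n$ $x$'s are placed every remaining $y$ has $a_i=n$, so the leftover positions may legitimately be filled by $y$'s. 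Once existence and uniqueness of the completion $(T_2,T_3)$ are established, the converse direction of Lemma~\ref{lem:Lem321} produces the unique permutation $\pi=\prod_{i=1}^{n}(x_i,z_i,y_i)\in\B_{3n}$ with $T_1(\pi)=T$, completing the proof.
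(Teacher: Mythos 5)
Your proposal follows essentially the same route as the paper: fix the $z$'s at the positions of $T$ and fill the remaining positions by the forced left-to-right rule that places the $(Y+1)$-st $y$ exactly when the running count of $x$'s equals the number of $z$'s preceding the $(Y+1)$-st $x$, which is precisely the paper's greedy construction of $W(T)$. Your reformulation of condition \ref{item:c} as equality of the $(z,x)$- and $(x,y)$-subwords, and your explicit check that condition \ref{item:b} prevents the scan from stalling, are correct and in fact somewhat more detailed than the paper's own verification of existence.
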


\begin{proof} 
Let $S = [3n] \setminus T = \{s_1 < s_2 < \cdots <s_{2n}\}$. This proof will involve letting $T_1=T$ and finding unique sets $T_2$ and $T_3$ so that $T_1, T_2$, and $T_3$ satisfy the properties listed in Lemma \ref{lem:Lem321}.

To begin, we will construct a word $W(T) = w_1w_2\ldots w_{3n}$ on the alphabet $\{x,y,z\}$. The positions of these $x$'s and $y$'s will determine the sets $T_2$ and $T_3$. We first let $w_i = z$ for all $i\in T$. To determine the other values, we let $w_{s_1} = x$ and proceed by induction. 

Suppose $W'=w_1\cdots w_{k-1}$, and $w_k\neq z$.
If $X(W') = Y(W')$, set $w_k = x$. Otherwise $X(W')>Y(W')$. Let $i=Y(W')+1$ and let $j$ satisfy the condition that exactly $j$ occurrences of $z$ appear before the $i^{\text{th}}$ $x$. If $j = X(W')$, then $w_k=y$; otherwise we write $w_k=x$. We stop when the length of $W(T)$ is $3n$.

We now create the sets $T_2$ and $T_3$ based on the word $W(T)$:
\[ T_2 = \{i \mid w_i = x\} \] \[T_3 = \{i \mid w_i = y\}\] and thus $\pi$ is uniquely determined by this algorithm and $W(\pi) = W(T)$.
Note that the sets $T_1, T_2,$ and $T_3$ clearly satisfy the conditions of Lemma \ref{lem:Lem321} and so $\pi \in \B_{3n}$.
\end{proof}

\begin{ex}

Let $T = \{1,2,3,6,11,14\}$ which satisfies the hypothesis of Lemma \ref{FlatsDetermine}.  To find the corresponding permutation $\pi \in \B_{18}$, we first set $S = [18]\setminus T= \{4, 5, 7, 8, 9, 10, 12, 13, 15, 16, 17, 18\}$. Following the algorithm in the proof of Lemma \ref{FlatsDetermine}, we then set $w_i=z$ for all $i \in T$ and set $w_4 = x$. To figure out what $w_5$ should be, notice that $X(zzzx) = 1$ and $Y(zzzx) = 0$, so we set $i=1$. Then notice that exactly $j=3$ $z$'s appear before the $i=1^{\text{st}}$ occurrence of $x$. Since $j \neq X(zzzx)$, set $w_5 = x$. We continue with this algorithm to get
\[ w(T)= zzzxxzxyyxzyyzxxyy. \]
This yields
\[ T_2 = \{4, 5, 7, 10, 15, 16\}, \]
\[ T_3 = \{8, 9, 12, 13, 17, 18\}, \]
which combine to yield the permutation
\begin{align*}
\pi &= (1, 8, 4)(2, 9, 5)(3, 12, 7)(6, 13, 10)(11, 17, 15)(14, 18, 16)\\
&= 8\ \ 9\ \ 12\ \ 1\ \ 2\ \ 13\ \ 3\ \ 4\ \ 5\ \ 6\ \ 17\ \ 7\ \ 10\ \ 18\ \ 11\ \ 14\ \ 15\ \ 16.
\end{align*}

\end{ex}

Lemmas \ref{lem:Lem321} and \ref{FlatsDetermine} show there a one-to-one correspondence between sets of the form \[T = \{t_1 < t_2 < \cdots < t_n\} \] satisfying $t_i \leq 3i -2$ and the set of permutations $\B_{3n}$. We enumerate the sets $T$ as characterized above in the proof of the next proposition.

\begin{prop} For all $n \geq 1$,
\[ |\B_{3n}| = \frac{1}{2n+1} {3n \choose n}. \]
\end{prop}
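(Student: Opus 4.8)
By Lemmas \ref{lem:Lem321} and \ref{FlatsDetermine}, the quantity $|\B_{3n}|$ equals the number of sets $T = \{t_1 < t_2 < \cdots < t_n\} \subseteq [3n]$ satisfying $t_i \leq 3i - 2$ for all $i \in [n]$. The plan is to count these sets directly and recognize the answer as the Fuss--Catalan number $\frac{1}{2n+1}\binom{3n}{n}$. First I would translate the constraint into a lattice-path or ballot-type condition: writing the complement $S = [3n]\setminus T$, the inequalities $t_i \le 3i-2$ say precisely that among the first $3i-2$ integers at least $i$ belong to $T$, i.e.\ the elements of $T$ never fall too far behind the ``one-in-three'' pace. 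Encoding $[3n]$ as a sequence of steps---an up step for each element of $T$ and a down (or flat) step for each element of $S$---turns the condition $t_i \le 3i-2$ into the statement that a certain partial-sum walk stays nonnegative.

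The cleanest route is to build a word $\omega = \omega_1 \omega_2 \cdots \omega_{3n}$ over $[3n]$ where $\omega_k = +2$ if $k \in T$ and $\omega_k = -1$ if $k \in S$ (so up steps have weight $2$ and there are $n$ of them, down steps have weight $-1$ and there are $2n$ of them, giving total sum $0$). I would then check that $t_i \le 3i-2$ is equivalent to every prefix sum of $\omega$ being nonnegative: the $i^{\text{th}}$ up step occurring at position $t_i$ means that just before it there are $t_i - i$ down steps, contributing a prefix sum $2(i-1) - (t_i - i) = 3i - 2 - t_i \ge 0$, which holds at the crucial positions exactly when $t_i \le 3i-2$, and one verifies the intermediate prefixes are then automatically nonnegative. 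Thus the sets $T$ are in bijection with nonnegative lattice paths using $n$ up-steps of height $+2$ and $2n$ down-steps of height $-1$.

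The number of such paths is the classical Fuss--Catalan number, counted by the cycle lemma (Dvoretzky--Motzkin). Explicitly, I would invoke the cycle lemma: among all $\binom{3n}{n}$ arrangements of $n$ up-steps and $2n$ down-steps, the cyclic-rotation argument shows that exactly a $\frac{1}{2n+1}$ fraction of them (one per cyclic class of length $2n+1$, after appending a sentinel) correspond to paths that stay nonnegative, yielding
\[
|\B_{3n}| = \frac{1}{2n+1}\binom{3n}{n}.
\]
Alternatively, one can avoid the cycle lemma and instead set up the standard recurrence for these ballot sequences via a first-return decomposition and verify that the Fuss--Catalan numbers satisfy it, but the cycle lemma gives the closed form most directly.

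I expect the main obstacle to be the precise verification that the single inequality-per-index condition $t_i \le 3i-2$ is genuinely equivalent to the global nonnegativity of all prefix sums, rather than just the prefix sums read off at up-step positions; this requires checking that a nonnegative value at each up-step forces nonnegativity at the intervening down-steps as well, which follows because prefix sums only decrease along runs of down-steps and the value immediately after the preceding up-step is nonnegative. Once that equivalence is nailed down, the enumeration is a direct appeal to the cycle lemma.
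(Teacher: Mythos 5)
Your argument is correct and lands on the same core reduction as the paper: by Lemmas \ref{lem:Lem321} and \ref{FlatsDetermine}, both proofs count the sets $T=\{t_1<\cdots<t_n\}$ with $t_i\le 3i-2$. Where you diverge is in how that count is obtained. The paper translates $T$ into a generalized Dyck path from $(0,0)$ to $(2n,n)$ lying below $y=\tfrac12 x$ (via $t_i=i+s_i$ with $0\le s_i\le 2i-2$) and then simply cites the known Fuss--Catalan enumeration of such paths. You instead encode $T$ as a word of $n$ steps of $+2$ and $2n$ steps of $-1$, verify carefully that $t_i\le 3i-2$ is equivalent to nonnegativity of all prefix sums (your observation that prefix sums only decrease along runs of down-steps, so it suffices to check them just before each up-step, is exactly the right justification), and then derive the count $\tfrac{1}{2n+1}\binom{3n}{n}$ from the cycle lemma. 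This makes your proof self-contained where the paper's is by citation. One bookkeeping slip: after appending the sentinel down-step you have $3n+1$ steps, so the cyclic classes have size $3n+1$, not $2n+1$; the cycle lemma then gives $\tfrac{1}{3n+1}\binom{3n+1}{n}$, which equals $\tfrac{1}{2n+1}\binom{3n}{n}$, so the final answer is unaffected, but the phrase ``one per cyclic class of length $2n+1$'' should be corrected.
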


\begin{proof}
The numbers given by the right hand side of the stated equation are a special case of the Fuss-Catalan numbers and are well known to enumerate generalized Dyck paths from $(0,0)$ to $(2n,n)$ (see for example \cite{Aval}). These paths are comprised of only east-steps and north-steps and always lie below the line $y = \frac{1}{2}x$. There is a straightforward bijection between these paths and sets of the form $T = \{t_1 < t_2 < \cdots < t_n\}$ with $t_i \leq 3i - 2$, described here.

Given a generalized Dyck path $D$ as described above, there are $2n$ east-steps. The height of the $i^{\text{th}}$ east-step must be at a height between 0 and $2(i-1)$ in order to satisfy the requirement that the path lies under the line $y = \frac{1}{2}x$. Thus we can obtain a sequence $S = \{s_1 < s_2 < \cdots < s_n\}$ with $0\leq s_i \leq 2i-2$. This is clearly invertible since the heights of the east-steps of $D$ determine the location of the north-steps as well. We obtain the sequence $T= \{t_1 < t_2 < \cdots < t_n\}$ by letting $t_i = i+s_i$. Since this is a bijection, the sets $T$ of this form are also enumerated by $\frac{1}{2n+1} {3n \choose n}$. By Lemma~\ref{FlatsDetermine}, the proposition follows. 
\end{proof}

\begin{cor}\label{cor-flats}
Let $\mathcal{C}_{3n}$ be the set of all permutations in $Av_{3n}^\star(321)$ composed of only cycles of the form 231. Then for all $n \geq 1$,
\[ |\mathcal{C}_{3n}| = \frac{1}{2n+1} {3n \choose n}. \]
\end{cor}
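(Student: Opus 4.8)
The plan is to deduce Corollary~\ref{cor-flats} directly from the proposition that precedes it, using the inverse (or, equivalently, reverse-complement) symmetry already established in the introduction, rather than repeating the entire structural analysis of Lemmas~\ref{lem:Lem321} and~\ref{FlatsDetermine} for cycles of the form $231$. Recall that in the introduction the paper observes that $\pi \in \Av_{3n}^\star(\sigma)$ if and only if $\pi^{-1} \in \Av_{3n}^\star(\sigma^{-1})$. The key arithmetic fact is that the pattern $321$ is an involution, so $321^{-1} = 321$; thus algebraic inversion maps $\Av_{3n}^\star(321)$ to itself bijectively. Moreover, inversion sends a $3$-cycle of the form $312$ to a $3$-cycle of the form $231$ and vice versa, since these two $3$-cycles $(1,3,2)$ and $(1,2,3)$ are inverse to one another as permutations.

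First I would make precise the claim that $\pi \mapsto \pi^{-1}$ restricts to a bijection between $\B_{3n}$ and $\mathcal{C}_{3n}$. Recall $\B_{3n}$ is the set of $321$-avoiding permutations in $\S_{3n}^\star$ whose every $3$-cycle is of the form $312$, and $\mathcal{C}_{3n}$ is the analogous set with every cycle of the form $231$. The inversion map is an involution on all of $\S_{3n}$ and, by the symmetry observation with $\sigma = 321$, it preserves the property of avoiding $321$. The only additional point to verify is that it interchanges cycle forms: if $\pi$ is a product of disjoint $3$-cycles all of the form $312$, then $\pi^{-1}$ is the product of their inverses, each of which is a $3$-cycle of the form $231$ (as a permutation, $(1,3,2)^{-1} = (1,2,3)$, which in one-line notation on its three support elements realizes the pattern $231$). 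Hence $\pi \in \B_{3n}$ implies $\pi^{-1} \in \mathcal{C}_{3n}$, and conversely; since inversion is its own inverse, this is a bijection.

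With the bijection in hand, the conclusion is immediate: $|\mathcal{C}_{3n}| = |\B_{3n}| = \frac{1}{2n+1}\binom{3n}{n}$ by the preceding proposition. The step I expect to require the most care is the verification that inversion exactly swaps the two cycle forms while fixing the avoidance class; this is a short but genuinely necessary check, since it is the only place where the specific structure of $321$ (being an involution as a pattern) and of the two relevant $3$-cycles is used. Everything else is formal, so no summation or generating-function computation is needed. An alternative I would keep in reserve, in case one prefers to stay within the one-line/combinatorial picture rather than invoking algebraic inversion, is to use the reverse-complement symmetry $\pi \mapsto \pi^{rc}$ with $\sigma = 321$ (noting $321^{rc} = 321$) together with a check of how $rc$ acts on cycle forms; but the inversion argument is the cleaner route since $321$ is already self-inverse and inversion transparently exchanges $(1,3,2)$-cycles with $(1,2,3)$-cycles.
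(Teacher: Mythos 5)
Your proposal is correct and is essentially identical to the paper's own proof: the paper likewise deduces the corollary from the preceding proposition by noting that $321$ is its own inverse, so $\pi\mapsto\pi^{-1}$ preserves $321$-avoidance while turning each cycle of the form $312$ into one of the form $231$. No substantive difference beyond your more careful spelling-out of why inversion swaps the two cycle forms.
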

\begin{proof}
Notice that if $\pi$ avoids 321, we must have that $\pi^{-1}$ avoids 321 as well since 321 is its own inverse. Since $312^{-1} = (132)^{-1} = (123) = 231$, each cycle of the form 312 becomes a cycle of the form 213 when we take its inverse, and thus the result follows. 
\end{proof}

The goal of this section is to enumerate all 321-avoiding permutations composed of only $3$-cycles. To this end, we need to consider permutations that contain both cycles of the form 231 and cycles of the form 132. We proceed with the following definition.

\begin{defn}
Let $n\geq 1$.
\begin{itemize}
\item Let $\mathcal{T}_n$ be the set of all subsets $T = \{t_1<t_2<\cdots<t_n\}$ of $[3n]$ satisfying $t_i<3n-2$. 
\item Let $W(T)$ be defined as in the proof of Lemma~\ref{FlatsDetermine}.
\item For a word $W = w_1w_2\ldots w_{3n}$, let $W^{(i)}$ be the $i^{\text{th}}$ \textit{initial segment} of $W$ defined to be $W^{(i)} = w_1w_2\ldots w_j$ where $w_j$ is the $i^{\text{th}}$ occurrence of $y$ in $W$.
\end{itemize}
\end{defn}

\begin{ex}\label{ex n=2}
Consider the case when $n=2$. Then $\mathcal{T}_2$ consists of the sets $ \{1,2\},  \{1,3\},$ and $\{1,4\}$ corresponding to the words \begin{align*}
W(\{1,2\}) &= zzxxyy, \\
W(\{1,3\}) &=zxzyxy, \\
W(\{1,4\}) &=zxyzxy ,
\end{align*}
respectively. As before, the $i^{\text{th}}$ occurrence of $x$, $i^{\text{th}}$ occurrence of $y$, and the $i^{\text{th}}$ occurrence of $z$ are all in the same cycle. 
Consider the first word $z_1z_2x_1x_2y_1y_2$. If we let $z_1x_1y_1$ be a 312 pattern and let $z_2x_2y_2$ be a 231 pattern, then we get $\pi=541632$ which contains the $321$-pattern $x_2y_1y_2 = 632$. Similarly, if we let $z_1x_1y_1$ be a 231 pattern and let $z_2x_2y_2$ be a 312 pattern, then we get $\pi=365214$ which contains the $321$ pattern $x_1x_2y_1 = 521$. Thus for the word $z_1z_2x_1x_2y_1y_2$ (which has $x_2$ appear before $y_1$), we must have that both cycles have the same pattern type.

For the other two words, we find that the permutations obtained by letting  $z_1x_1y_1$ be a 312 pattern and let $z_2x_2y_2$ be a 231 pattern or by letting  $z_1x_1y_1$ be a 231 pattern and let $z_2x_2y_2$ be a 312 pattern do avoid the pattern 321. In particular we get the four permutations 
$$ 246135, 415263, 231645, 312564.$$
As we've seen in Lemma~\ref{FlatsDetermine} and Corollary~\ref{cor-flats}, for each of these three words, we can also take all cycles to be of the same type, so we have that $|\Av_{6}^\star(321)| = 2+4+4 = 10.$
\end{ex}

\begin{prop}\label{prop:all 321}
For $n\geq 1$, we have that
$$ |\Av_{3n}^\star(321)| = \sum_{T \in \mathcal{T}_n} 2^{h(T)}$$
where $h(T)$ is the number of times  $X(W^{(i)})=i$ for initial segments $W^{(i)}$ of the word $W=W(T)$.
\end{prop}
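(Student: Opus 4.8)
The plan is to set up a bijection between $\Av_{3n}^\star(321)$ and the set of pairs $(T,\varepsilon)$ in which $T\in\mathcal{T}_n$ and $\varepsilon$ assigns a type ($312$ or $231$) to each block of $T$, the blocks being the intervals counted by $h(T)$. First I would note that every $\pi\in\Av_{3n}^\star(321)$ is built from $312$- and $231$-cycles only, and that the set $T=T_1(\pi)$ of cycle-minima lies in $\mathcal{T}_n$: the at most $i-1$ cycles whose minimum is below $t_i$ account for every value smaller than $t_i$ and contribute at most $3(i-1)$ of them, so $t_i-1\le 3(i-1)$, i.e.\ $t_i\le 3i-2$. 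This gives a map $\pi\mapsto T_1(\pi)$ into $\mathcal{T}_n$, and it remains to show that the fiber over each $T$ has exactly $2^{h(T)}$ elements.

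The first structural step is to show that, for fixed $T$, every $\pi$ in the fiber has the same cycle-supports, namely the triples $x_i<y_i<z_i$ read off from the positions of the $z$-, $x$-, and $y$-letters of $W(T)$ as in the proof of Lemma~\ref{FlatsDetermine} (so that the $i$-th smallest elements of $T_1,T_2,T_3$ share a cycle, as in Observation~\ref{obs321}). I would prove this by showing that the value-sets $T_1(\pi),T_2(\pi),T_3(\pi)$ of any $\pi$ in the fiber satisfy the nesting and alignment conditions \ref{item:a}--\ref{item:c} of Lemma~\ref{lem:Lem321}: conditions \ref{item:a} and \ref{item:b} are immediate, while a violation of the nesting condition \ref{item:c} would, as in the proof of that lemma but now verified for both cycle types, force a decreasing subsequence of length three. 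The converse half of Lemma~\ref{lem:Lem321} then pins the supports down to those of $W(T)$, so each $\pi$ in the fiber is encoded by $T$ together with a type-vector $\varepsilon\in\{312,231\}^n$ recording the pattern of each cycle, and distinct pairs give distinct permutations.

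The heart of the proof is then to characterize the admissible $\varepsilon$. I would first unwind the balanced-index condition: writing $B=\{i: X(W^{(i)})=i\}$, the equality $X(W^{(i)})=i$ holds exactly when no later cycle has its middle position before $z_i$, that is, when $y_j>z_i$ for all $j>i$; these indices mark where a fresh group of cycles begins, so $B$ records the right ends of the blocks. The claim to prove is that $\pi$ avoids $321$ if and only if $\varepsilon$ is constant on each block. For the forcing direction I would run a pairwise analysis: when $i\notin B$ witnesses a cycle $j>i$ with $y_j<z_i<z_j$, comparing the two possible arrangements shows that giving cycles $i$ and $j$ different types always produces a decreasing triple (this is precisely the obstruction seen for $W=zzxxyy$ in Example~\ref{ex n=2}); since the witnessing $j$ lies in the same block as $i$, such overlaps link each non-balanced index to a larger index of its block, and a short largest-index argument then shows $\varepsilon$ must be constant on the whole block. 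Conversely, when two cycles straddle a balanced index their supports are separated in exactly the way needed to rule out any cross-cycle $321$ for every choice of types, so every block-constant $\varepsilon$ is admissible.

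Finally I would count. Since all $n$ of the $x$-letters precede the final $y$-letter, $X(W^{(n)})=n$ and hence $n\in B$ always; therefore $B$ partitions $[n]$ into exactly $|B|=h(T)$ blocks, each of which may independently be declared of type $312$ or $231$. This yields $2^{h(T)}$ admissible type-vectors, i.e.\ $2^{h(T)}$ permutations in the fiber over $T$, and summing over $T\in\mathcal{T}_n$ gives the stated formula. I expect the main obstacle to be the type-robust pattern analysis underlying both structural steps: verifying that the nesting condition \ref{item:c} and the within-block forcing of equal types continue to produce the claimed $321$-patterns for every combination of $312$- and $231$-cycles, and dually that cycles separated by a balanced index can never combine into a $321$ regardless of their types.
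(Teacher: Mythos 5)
Your proposal follows essentially the same route as the paper: map each permutation to its set $T$ of cycle minima, argue that the cycle supports are forced to be the triples read off from $W(T)$, show that the balanced indices $\{i : X(W^{(i)})=i\}$ cut $[n]$ into blocks on which the cycle type must be constant and across which it may be chosen freely, and count $2^{h(T)}$ per fiber. Your within-block forcing (a witness $y_j<z_i$ with $j>i$, which together with condition \ref{item:c} forces the fully interleaved configuration of Example~\ref{ex n=2}, then a largest-index argument) is a mild reorganization of the paper's consecutive-pair-plus-transitivity argument, and you are right, and commendably explicit, that Lemma~\ref{lem:Lem321}, Lemma~\ref{FlatsDetermine} and Observation~\ref{obs321} must be re-verified for cycles of both forms before they can be invoked here; the paper is no more detailed than you are on that point.

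There is one genuine omission in your sufficiency direction. You rule out cross-cycle $321$ patterns only for \emph{pairs} of cycles straddling a balanced index, but a decreasing subsequence of length three can take one element from each of \emph{three} distinct cycles, and showing that no two of the cycles jointly produce a $321$ does not preclude this. The paper handles it by a separate finite check: for $r<s<t$ with $r\le m_i<t$ it lists the possible support subwords on three cycles (six when the triple meets two blocks, four when it meets three) and verifies directly that no assignment of block-constant types yields a descending triple. Your plan needs an analogous step --- either this enumeration or an argument that any three-cycle $321$ would force a two-cycle obstruction already excluded. Everything else (the equivalence $X(W^{(i)})=i \iff y_{i+1}>z_i$, the fact that $n$ is always a balanced index so the blocks partition $[n]$, and the final count) is correct.
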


\begin{proof}
Let $T\in\mathcal{T}_n$ and let $W = W(T)$. Consider the set $M= \{m_1< m_2< \ldots< m_h\}$ where $X(W^{(m_i)}) = m_i$, i.e.\ for each $i$ the $m_i^{\text{th}}$ initial segment satisfies the condition that the number of $x$'s is equal to the number of $y$'s. Notice $m_h=n$ and $h(T) = h$. We will show that the $r^{\text{th}}$ cycle and the $s^{\text{th}}$ cycle have to be of the same form if and only if $m_{i-1}<r,s\leq m_{i}$ with the convention that $m_0:=0$. Since each cycle must be of the form 312 or 231, then this leaves $2^{h(T)}$ possible choices for each set $T$. 

We will first show that for all $m_{i-1}<r,s\leq m_{i}$, we must have that the cycle containing elements of $\{x_r, y_r, z_r\}$ must be of the same type as the cycle containing $\{x_s, y_s, z_s\}$. In other words, the cycles associated to the first $m_1$ $z$'s must either all be cycles of the form 312 or all be cycles of the form 231. Then the cycles associated to the next $(m_2-m_1)$ $z$'s must either all be cycles of the form 312 or all be cycles of the form 231, and so on. It is enough to show that for any $m_{i}<r <m_{i+1}$ the $r^{\text{th}}$ cycle and $(r+1)^{\text{st}}$ cycle are associated to the word $z_rz_{r+1}x_rx_{r+1}y_ry_{r+1}$ by Example~\ref{ex n=2}. But this must be true by the definition of $M$ because if $x_{r+1}$ appeared after $y_r$, we would have that $X(W^{(r)}) = r$. 

We now will show that for each $r\leq m_i$ and $s>m_i$, we cannot have that the $r^{\text{th}}$ cycle and $s^{\text{th}}$ cycle appear in the permutation in the form $z_rz_sx_rx_sy_ry_s$. Indeed, if $r=m_i$, then this would violate the condition that $X(W^{(m_i)}) = m_i$ since there are at least $s>m_i$ $x$'s appearing before the $m_i^{\text{th}}$ $y$. If $r<m_i$, then we must have the subword $z_rz_{m_i}z_sx_rx_{m_i}x_sy_ry_{m_i}y_s$ which also indicates that $X(W^{(m_i)}) \neq m_i$ since there are at least $s>m_i$ $x$'s appearing before the $m_i^{\text{th}}$ $y$. Therefore the subword given by the $r^{\text{th}}$ and $s^{\text{th}}$ cycles cannot contribute a 321-pattern even if they are of different forms. 

We have shown that for $r\leq m_i$ and $s>m_i$ and the $r^{\text{th}}$ cycle is of a different form than the $s^{\text{th}}$ cycle, the subword of $\pi$ given by those cycles does not contain a 321 pattern. It remains to show that for $r<s<t$ with $r\leq m_i$ and $t>m_i$, there is no 321 pattern appearing in the subword of $\pi$ given by those cycles. There are six possible subwords containing 3 cycles where $h=2$:
\[ zzxxzyyxy, zxzzyxxyy, zxzyzxxyy, zxyzzxxyy, zzxxyzyxy, zzxxyyzxy.\]
If the first cycle is a cycle of the form 312, and the last cycle is a cycle of the form 231, it is straightforward to check that the possible permutations do not contain a 321 pattern.  There are also four possible subwords containing 3 cycles where $h=3$:
\[ zxzyxzyxy, zxzyxyzxy, zxyzxzyxy, zxyzxyzxy.\]
Again, if the first cycle is a cycle of the form 312, and at least one of the other two cycles is a cycle of the form 231, it is straightforward to check that the possible permutations do not contain a 321 pattern.
\end{proof}

%

We can rewrite the previous proposition in terms of Dyck words of semilength $n$.

\begin{thm} \label{Thm321}
For all $n \geq 1$,
\[ |\Av_{3n}^\star(321)| = \sum_{D \in \D_n} 2^{h(D)} \prod_{i=1}^{n-1} {r_i(D) + s_i(D) \choose r_i(D)} \]
where 
\begin{itemize}
\item $\D_n$ is the set of Dyck words on $\{x,y\}$ of semilength $n$, 
\item $h(D)$ is the number of times an initial segment of $D$ has equal number of $x$'s and $y$'s, 
\item $r_i(D)$ is the number of $y$'s between the $i^{\text{th}}$ $x$ and the $(i+1)^{\text{st}}$ $x$, and 
\item $s_i(D)$ is the number of $x$'s between the $i^{\text{th}}$ $y$ and the $(i+1)^{\text{st}}$ $y$.
\end{itemize}
\end{thm}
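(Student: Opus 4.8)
The plan is to start from Proposition~\ref{prop:all 321}, which already writes $|\Av_{3n}^\star(321)| = \sum_{T \in \mathcal{T}_n} 2^{h(T)}$, and to reorganize this sum by grouping the sets $T \in \mathcal{T}_n$ according to a single Dyck word. Given $T$, recall that $W(T)$ is a word on $\{x,y,z\}$ with $n$ copies of each letter, and define $D = D(T)$ to be the word obtained by deleting every $z$ from $W(T)$. The two facts I need are: (i) $D(T) \in \D_n$ and the statistic $h(T)$ depends only on $D(T)$, with $h(T) = h(D(T))$; and (ii) for each fixed $D \in \D_n$ the number of $T \in \mathcal{T}_n$ with $D(T) = D$ equals $\prod_{i=1}^{n-1}\binom{r_i(D)+s_i(D)}{r_i(D)}$. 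Granting these, the sum splits as $\sum_{D \in \D_n} 2^{h(D)} \cdot \#\{T : D(T)=D\}$, which is exactly the claimed formula.

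For (i), I would first translate conditions \ref{item:a}, \ref{item:b}, \ref{item:c} of Lemma~\ref{lem:Lem321} into statements about the order in which the $i^{\text{th}}$ $x$, $i^{\text{th}}$ $y$, and $i^{\text{th}}$ $z$ occur in $W(T)$. Using $\pi = \prod(x_i,z_i,y_i)$ from Observation~\ref{obs321}, one finds that the positions of the $z$'s in $W(T)$ are $T_1$, the positions of the $x$'s are $T_2$, and the positions of the $y$'s are $T_3$; consequently \ref{item:a} says precisely that, for each $i$, the $i^{\text{th}}$ $z$ precedes the $i^{\text{th}}$ $x$, which precedes the $i^{\text{th}}$ $y$. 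In particular the $x$-$y$ subword $D$ has its $i^{\text{th}}$ $x$ before its $i^{\text{th}}$ $y$ for all $i$, so $D$ is a Dyck word of semilength $n$; and \ref{item:b} becomes automatic, since \ref{item:a} already forces at most $2(i-1)$ non-$z$ letters before the $i^{\text{th}}$ $z$. Because deleting the $z$'s changes no count of $x$'s before a given $y$, we get $X(W^{(i)})$ equal to the number of $x$'s before the $i^{\text{th}}$ $y$ of $D$; hence $X(W^{(i)}) = i$ exactly when the prefix of $D$ ending at its $i^{\text{th}}$ $y$ is balanced. Since a balanced prefix of a Dyck word must end in $y$, these are precisely the initial segments of $D$ with equally many $x$'s and $y$'s, giving $h(T) = h(D)$.

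For (ii), the heart of the argument, I would show that condition \ref{item:c} is equivalent to the statement that the number of $z$'s preceding the $i^{\text{th}}$ $x$ in $W(T)$ equals the number of $x$'s preceding the $i^{\text{th}}$ $y$ in $D$. Indeed, for fixed $i$ the right-hand side of \ref{item:c} (``the $i^{\text{th}}$ $y$ lies between the $j^{\text{th}}$ and $(j+1)^{\text{st}}$ $x$'') holds for a unique $j$ determined by $D$, so \ref{item:c} forces the $i^{\text{th}}$ $x$ to sit between the $j^{\text{th}}$ and $(j+1)^{\text{st}}$ $z$ for that same $j$. Taking successive differences pins down the number of $z$'s lying strictly between the $i^{\text{th}}$ and $(i+1)^{\text{st}}$ $x$ to be exactly $s_i(D)$, and forces the remaining $z$'s before the first $x$ with none after the last $x$. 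Thus reconstructing a valid $W$ from $D$ amounts to distributing, within the gap between the $i^{\text{th}}$ and $(i+1)^{\text{st}}$ $x$, the prescribed $s_i(D)$ copies of $z$ among the $r_i(D)$ copies of $y$ already present there; the gaps are independent, giving $\prod_{i=1}^{n-1}\binom{r_i(D)+s_i(D)}{r_i(D)}$ choices. I would then verify that each such interleaving yields a genuine element of $\mathcal{T}_n$: condition \ref{item:c} holds by construction, and condition \ref{item:a} (that the $i^{\text{th}}$ $z$ precede the $i^{\text{th}}$ $x$) is automatic because $D$ being Dyck guarantees at least $i$ $x$'s, hence at least $i$ $z$'s, before the $i^{\text{th}}$ $x$. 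Conversely, every $T$ with $D(T)=D$ arises from such an interleaving, completing the bijection.

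The main obstacle I expect is step (ii): the clean translation of condition \ref{item:c} into a count of $z$'s per ``$x$-gap'' and the verification that the interleaving with the $y$'s is genuinely unconstrained. One must check carefully that the forced $z$-counts sum to $n$ (they telescope to the number of $x$'s before the last $y$, which is $n$) and that the boundary regions before the first $x$ and after the last $x$ contribute only a trivial factor, so that the product legitimately runs over $i = 1, \ldots, n-1$. The key identity making the binomials appear is that the number of $z$'s in the $i^{\text{th}}$ $x$-gap equals the number of $x$'s in the $i^{\text{th}}$ $y$-gap, i.e.\ $s_i(D)$; once this bookkeeping is settled, the factors fall out, and the case $n=2$ of Example~\ref{ex n=2} (yielding $|\Av_6^\star(321)|=10$) serves as a reassuring check.
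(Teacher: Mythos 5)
Your proposal is correct and follows essentially the same route as the paper's proof: starting from Proposition~\ref{prop:all 321}, grouping the sets $T\in\mathcal{T}_n$ by the Dyck word obtained from $W(T)$ by deleting the $z$'s, and using Lemma~\ref{lem:Lem321}\ref{item:c} to pin the number of $z$'s in each $x$-gap to $s_i(D)$, which yields the product of binomials $\binom{r_i+s_i}{r_i}$ for the fiber sizes. Your explicit verification that $h(T)=h(D(T))$ and that condition~\ref{item:b} follows from~\ref{item:a} are details the paper leaves implicit, but they do not change the argument.
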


\begin{proof}[Proof of Theorem \ref{Thm321}]
Let $T\in\mathcal{T}_n$. First notice that by construction, the subword $D_T$ of $W(T)$ of length $2n$ given by taking all occurrences of $x$ and $y$ is a Dyck word. 
By Proposition~\ref{prop:all 321}, it suffices to show that for any given Dyck word $D$, the number of sets $T\in\mathcal{T}_n$ with $D_T=D$ is $\prod_{i=1}^{n-1} {r_i(D) + s_i(D) \choose r_i(D)}.$

Since we already have the relative order of $x$’s and $y$’s, we need only to decide where the $z$’s are in relation to these $x$’s and $y$’s. Consider a fixed $1 \leq i \leq n$.  We first claim that there are $s_i(D)$ $z$’s between the $i^{\text{th}}$ and $(i+1)^{\text{st}}$ $x$.  Since there are $s_i$ $x$’s between the $i^{\text{th}}$ and $(i+1)^{\text{st}}$ $x$ (by definition), there is some $j$ so that:
\begin{itemize}
\item the $i^{\text{th}}$ $y$ appears between the  $j^{\text{th}}$ and $(j+1)^{\text{st}}$ $x$, and
\item the $(i+1)^{\text{st}}$ $y$ appears between the $(j+s_i)^{\text{th}}$ and $(j+s_i + 1)^{\text{st}}$ $x$.
\end{itemize}
By Lemma~\ref{lem:Lem321} (c), this is equivalent to:
\begin{itemize}
\item the $i^{\text{th}}$ $x$ appears between the  $j^{\text{th}}$ and $(j+1)^{\text{st}}$ $z$, and
\item the $(i+1)^{\text{st}}$ $x$ appears between the $(j+s_i)^{\text{th}}$ and $(j+s_i + 1)^{\text{st}}$ $z$.
\end{itemize}
Thus, there are $s_i$ $z$’s between the $i^{\text{th}}$ and $(i+1)^{\text{st}}$ $x$ -- namely, the $(j+1)^{\text{st}}$ $z$ through the $(j+s_i)^{\text{th}}$ $z$ -- and our claim holds.

Now consider our word $D$ again. Between the $i^{\text{th}}$ and $(i+1)^{\text{st}}$ $x$, there must be $r_i$ $y$’s and $s_i$ $z$’s.  There are ${r_i + s_i \choose r_i}$ such subwords of $y$’s and $z$’s. We note as well that if $k$ is the number so that $r_i=0$ for $i < k$ (with the assumption that $r_0 = 0$), then there must be $k$ $z$’s before the first $x$. 
Since any such words will satisfy the conditions of Lemma~\ref{lem:Lem321}, we have that the set of words constructed from $D$ is $W(T)$ for some $T$ and thus there are $\prod_{i=1}^{n-1} {r_i(D) + s_i(D) \choose r_i(D)}$ sets $T\in\mathcal{T}_n$ associated to $D$. Since each $T$ is associated to a unique Dyck path $D$, the result follows.
\end{proof}

We illustrate this proof with an example. 
\begin{ex}
Suppose $D = xxyyxyxxyxyy$. Then we have $h(D) = 3$, $\mathbf{r}(D)= (0,2,1,0,1)$, and $\mathbf{s}(D) = (0,1,2,1,0)$. Then $\prod_{i=1}^{n-1} {r_i(D) + s_i(D) \choose r_i(D)}=9$.  

Following the proof of Theorem \ref{Thm321}, there must be two $z$’s before the first $x$.  Also, between the first and second $x$, there must be $r_1 = 0$ $y$’s and $s_1=0$ $z$’s.  Between the second and third $x$, there must be $r_2 = 2$ $y$’s and $s_2=1$ $z$’s; the possible subwords of $y$ and $z$ here are:
\[ yyz, yzy, zyy.\]
Then, between the third and fourth $x$, there must be $r_3=1$ $y$’s and $s_3=2$ $z$’s; the possible subwords of $y$ and $z$ here are:
\[ yzz, zyz, zzy. \]
Finally, there are 0 $y$’s and 1 $z$ between the fourth and fifth $x$, and 1 $y$ and 0 $z$’s between the fifth and sixth $x$.  Thus, the 9 possible words $W$ are:
\begin{align*} &zzxxyyzxyzzxzxyxyy, zzxxyyzxzyzxzxyxyy, zzxxyyzxzzyxzxyxyy,\\& zzxxyzyxyzzxzxyxyy, zzxxyzyxzyzxzxyxyy, zzxxyzyxzzyxzxyxyy, \\ &zzxxzyyxyzzxzxyxyy, zzxxzyyxzyzxzxyxyy, zzxxzyyxzzyxzxyxyy.
\end{align*}
Notice that by considering the positions of the $z$’s, these correspond to the sets in $\mathcal{T}_6$:
\begin{align*}
& \{1,2,7,10,11,13\}, \{1,2,7,9,11,13\},\{1,2,7,9,10,13\},\\ &\{1,2,6,10,11,13\},\{1,2,6,9,11,13\},
\{1,2,6,9,10,13\},\\ &\{1,2,5,10,11,13\},\{1,2,5,9,11,13\},\{1,2,5,9,10,13\}.
\end{align*}
Because $h(D)=3$, there are $2^3=8$ permutations in $\Av^\star_{18}(321)$ associated to each of the nine sets above.  In particular, the first two 3-cycles are both either of the form 231 or 312; the third 3-cycle is either of the form 231 or 312; and the last three 3-cycles are all of the same form. We illustrate this by explicitly listing the eight permutations associated to the set $T=\{1,2,7,10,11,13\}$ below:
\begin{align*}
& 5 \  6 \ 1 \  2 \ 3 \ 4 \ 9 \ 7 \ 8 \ 15 \ 17 \ 10 \ 18 \ 11 \ 12 \ 13 \ 14  \ 16 
& 5 \  6 \ 1 \  2 \ 3 \ 4 \ 9 \ 7 \ 8 \ 12 \ 14 \ 15 \ 16 \ 17 \ 10 \ 18 \ 11 \ 13\phantom{.} \\
& 5 \  6 \ 1 \  2 \ 3 \ 4 \ 8 \ 9 \ 7 \ 15 \ 17 \ 10 \ 18 \ 11 \ 12 \ 13 \ 14  \ 16 
& 5 \  6 \ 1 \  2 \ 3 \ 4 \ 8 \ 9 \ 7 \ 12 \ 14 \ 15 \ 16 \ 17 \ 10 \ 18 \ 11 \ 13\phantom{.} \\
& 3 \ 4 \ 5 \ 6 \ 1 \ 2  \ 9 \ 7 \ 8 \ 15 \ 17 \ 10 \ 18 \ 11 \ 12 \ 13 \ 14  \ 16 
& 3 \ 4 \ 5 \ 6 \ 1 \ 2 \ 9 \ 7 \ 8 \ 12 \ 14 \ 15 \ 16 \ 17 \ 10 \ 18 \ 11 \ 13\phantom{.}\\
& 3 \ 4 \ 5 \ 6 \ 1 \ 2  \ 8 \ 9 \ 7 \ 15 \ 17 \ 10 \ 18 \ 11 \ 12 \ 13 \ 14  \ 16 
& 3 \ 4 \ 5 \ 6 \ 1 \ 2 \ 8 \ 9 \ 7 \ 12 \ 14 \ 15 \ 16 \ 17 \ 10 \ 18 \ 11 \ 13. 
\end{align*}
\end{ex}

As a corollary of the Proof of Theorem~\ref{Thm321}, we have an interesting property of Dyck paths. 

\begin{cor}
For all $n \geq 1$,
\[ \sum_{D \in \D_n}  \prod_{i=1}^{n-1} {r_i(D) + s_i(D) \choose r_i(D)} = \frac{1}{2n+1}{{3n}\choose{n}} \] 
where $\D_n$ are the Dyck words of semilength $n$ and $r_i(D)$ and $s_i(D)$ are defined as in Theorem~\ref{Thm321}.
\end{cor}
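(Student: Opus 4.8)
The plan is to derive this identity as an immediate consequence of the two enumerations of $\Av_{3n}^\star(321)$ we have already established. On one hand, combining Lemma~\ref{lem:Lem321} and Lemma~\ref{FlatsDetermine}, the permutations in $\B_{3n}$ (equivalently, the sets $T$ with $t_i \leq 3i-2$) are enumerated by $\frac{1}{2n+1}\binom{3n}{n}$, as shown in the proposition preceding Corollary~\ref{cor-flats}. On the other hand, in the proof of Theorem~\ref{Thm321} we showed that for any fixed Dyck word $D \in \D_n$, the number of sets $T \in \mathcal{T}_n$ with $D_T = D$ is exactly $\prod_{i=1}^{n-1}\binom{r_i(D)+s_i(D)}{r_i(D)}$. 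The identity will follow by summing this count over all Dyck words $D$ and recognizing that the total is precisely the number of sets $T$.

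The key step is to reconcile the indexing sets. First I would note that $\mathcal{T}_n$ is defined as the collection of subsets $T = \{t_1 < \cdots < t_n\} \subseteq [3n]$ with $t_i \leq 3i-2$, which is exactly the family of sets $T$ satisfying condition \ref{item:b} of Lemma~\ref{lem:Lem321}, and hence by Lemma~\ref{FlatsDetermine} is in bijection with $\B_{3n}$. Therefore $|\mathcal{T}_n| = |\B_{3n}| = \frac{1}{2n+1}\binom{3n}{n}$. Next I would observe that each $T \in \mathcal{T}_n$ determines a unique Dyck word $D_T \in \D_n$ via the word $W(T)$ (its subword of $x$'s and $y$'s), so the map $T \mapsto D_T$ partitions $\mathcal{T}_n$ into fibers indexed by $\D_n$. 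Summing the fiber sizes gives
\[
|\mathcal{T}_n| = \sum_{D \in \D_n} |\{T \in \mathcal{T}_n : D_T = D\}| = \sum_{D \in \D_n} \prod_{i=1}^{n-1}\binom{r_i(D)+s_i(D)}{r_i(D)},
\]
where the last equality is exactly the fiber count established inside the proof of Theorem~\ref{Thm321}.

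Equating the two evaluations of $|\mathcal{T}_n|$ yields the claimed identity. The argument is essentially a counting-in-two-ways bookkeeping exercise, so there is no serious obstacle; the only point requiring care is making sure the fiber-size formula from the proof of Theorem~\ref{Thm321} is stated for \emph{all} of $\D_n$ (including the need to verify each resulting word genuinely arises as $W(T)$ for a valid $T \in \mathcal{T}_n$, which that proof already handles via the conditions of Lemma~\ref{lem:Lem321}). One should also confirm that every $T \in \mathcal{T}_n$ produces a Dyck word $D_T$ and that the map is well-defined and surjective onto $\D_n$, both of which are immediate from the construction of $W(T)$ in Lemma~\ref{FlatsDetermine}. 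Thus the corollary follows directly, with the combinatorial content entirely contained in the two prior enumerations.
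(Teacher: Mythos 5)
Your argument is correct and is exactly the intended one: the paper presents this identity as an immediate consequence of the proof of Theorem~\ref{Thm321}, obtained by counting $|\mathcal{T}_n|$ in two ways --- once via the Fuss--Catalan enumeration of the sets $T$ (equivalently $|\B_{3n}|$), and once by summing the fiber sizes $\prod_{i=1}^{n-1}\binom{r_i(D)+s_i(D)}{r_i(D)}$ over the Dyck words $D$. Your reconciliation of the indexing sets and the well-definedness of $T\mapsto D_T$ matches what the paper's proof of Theorem~\ref{Thm321} already establishes, so nothing further is needed.
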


We conclude this section by asking the following question. 
\begin{question}
Consider the polynomial 
\[f(t) = \sum_{D \in \D_n} t^{h(D)} \prod_{i=1}^{n-1} {r_i(D) + s_i(D) \choose r_i(D)}. \] Is there a way to extract the coefficients of $t^k$ in general?
\end{question}

Notice that $f(1) = \frac{1}{2n+1}{3n \choose n}$ and $f(2) = |\Av^\star_{3n}(321)|$. Answering the above question would give us a closed form for $|\Av^\star_{3n}(321)|$.

\section{Avoiding $123$ or a pair of patterns}\label{other}

In this last section, we enumerate the permutations in $S^\star_{3n}$ that avoid 123 or any pair of patterns of length 3.

\begin{thm}\label{Thm123}
For all $n \geq 3$,  $$|\Av_{3n}^\star(123)| = 0.$$ Also, $|\Av_{3}^\star(123)| = 2$ and $|\Av_{6}^\star(123)| = 6$. 
\end{thm}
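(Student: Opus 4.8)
The plan is to reduce everything to a few rigidity facts forced by $123$-avoidance and then exploit the $3$-cycle hypothesis. First I would dispose of the base cases by direct enumeration: $\Av_{3}^\star(123)=\{231,312\}$, and a check of the $40$ elements of $\S_6^\star$ shows that exactly the six permutations $541632,\ 365214,\ 462513,\ 562143,\ 536142,\ 436512$ avoid $123$ (this list is visibly closed under taking inverses and under $\pi\mapsto\pi^{rc}$, each of which preserves both $123$-avoidance and the property of being a product of $3$-cycles, a useful consistency check). The remaining content is that $\Av_{3n}^\star(123)=\emptyset$ for $n\ge 3$, so suppose $\pi$ lies in this set. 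Writing $t$ for the position of $3n$ and $s$ for the position of $1$, $123$-avoidance immediately yields: (A) $\pi_1>\cdots>\pi_{t-1}$; (B) $\pi_{s+1}>\cdots>\pi_{3n}$; (C) the values smaller than $\pi_{3n}$ appear in decreasing order; and (D) the values larger than $\pi_1$ appear in decreasing order. Consequently $\pi$ has at most two left-to-right maxima (namely $\pi_1$ and $3n$) and at most two right-to-left minima (namely $1$ and $\pi_{3n}$).

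Next I would pin down two explicit cycles. Since $\pi_t=3n$, the cycle through $3n$ must be $(t,\,3n,\,\pi_{3n})$, forcing $\pi_{\pi_{3n}}=t$; likewise the cycle through $1$ is $(s,\,1,\,\pi_1)$, forcing $\pi_{\pi_1}=s$. Absence of fixed points gives $2\le s$ and $t\le 3n-1$. The pivotal clean step is to rule out $t=1$. If $\pi_1=3n$, then the cycle above reads $(1,3n,s)$ with $\pi_{3n}=s$, and (B) together with a counting argument forces $\pi_j=3n+s-j$ for all $j>s$; these suffix positions then pair off as transpositions $j\leftrightarrow 3n+s-j$, so avoiding a $2$-cycle forces $s=3n-1$. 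But then the value $3n-1$ occupies the last position, so by (C) every remaining entry lies below it and must appear in decreasing order, making the whole middle block $\pi_2\cdots\pi_{3n-2}$ a reversal -- again a product of transpositions. Either way a $2$-cycle appears, which is impossible for $n\ge 2$. Applying the reverse-complement symmetry gives $s\ne 3n$ as well, so for $n\ge 2$ there are exactly two left-to-right maxima and two right-to-left minima, all interior.

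With this in hand I would split on the order of $s$ and $t$. When $s<t$, Facts (A) and (B) force $t=s+1$ and show $\pi$ is exactly two decreasing runs, $\pi_1>\cdots>\pi_s=1$ followed by $\pi_{s+1}=3n>\cdots>\pi_{3n}$; the task is then to show such a permutation is a product of $3$-cycles only for the values of $s$ that occur when $n\le 2$. Here the explicit cycles $(1,\pi_1,s)$ and $(s+1,3n,\pi_{3n})$ impose strong arithmetic constraints on which value sits in each run, and one tracks how each run's entries are routed -- staying inside the run (which produces short cycles) or crossing to the other run -- to extract a forbidden fixed point or $2$-cycle once $n\ge 3$. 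When $t<s$, the same extreme cycles sit at the ends of a decreasing prefix and a decreasing suffix with a constrained middle segment $\pi_t\cdots\pi_s$ bracketed by $3n$ and $1$; combining (A)--(D) again forces enough monotonicity that a cycle of length $\le 2$ must appear, contradicting the $3$-cycle hypothesis.

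The hard part will be making this ``forced monotone block $\Rightarrow$ forbidden short cycle'' mechanism uniform, rather than the clean one-line computation it becomes when $t=1$. The subtlety is that a decreasing block need not collapse into transpositions on its own: its entries can be exported into the other blocks by the cycles, so one must control exactly how many entries a cycle is allowed to move out of a block and show that for $n\ge 3$ there are never enough such ``exits'' to assemble all the $3$-cycles without closing up a cycle of length $1$ or $2$. I would try to package this as a single monotone invariant that already fails at $n=3$ -- for instance the deficit between the number of left-to-right minima and of right-to-left maxima, or the forced partition of the $3n$ positions into $n$ ascent-bottoms (each a left-to-right minimum), $n$ ascent-tops (each a right-to-left maximum), and $n$ remaining positions, one of each per $3$-cycle -- which is where I would concentrate the effort.
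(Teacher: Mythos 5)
Your base cases are fine, and the structural facts you extract from $123$-avoidance (the two decreasing runs determined by the positions of $3n$ and of $1$, the explicit cycles through $1$ and $3n$, and the exclusion of $t=1$ and $s=3n$) are all correct. But the heart of the theorem --- that $\Av_{3n}^\star(123)=\emptyset$ for $n\ge 3$ --- is not actually established. After the reductions, what remains is precisely the claim that a permutation consisting of two decreasing runs (with the stated extra constraints) cannot be a product of $n$ disjoint $3$-cycles once $n\ge 3$; you prove this only in the special case $t=1$ (where the suffix collapses into transpositions), and for the two remaining cases ($s<t$ and $t<s$) you describe a hoped-for ``forced monotone block $\Rightarrow$ forbidden short cycle'' invariant that you explicitly acknowledge you have not constructed. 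This is not a minor detail: since nonempty examples exist for $n=1,2$, any such mechanism must use $n\ge 3$ in an essential way, and nothing in the outline identifies where that happens. As written, the main case is a plan rather than a proof.

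There is a much shorter route, which is the one the paper takes: the class $\S_{3n}^\star$ is hereditary under deleting whole cycles. If $\pi\in\Av_{3n}^\star(123)$ with $n\ge 3$, let $S$ be the union of the supports of any three of its $3$-cycles; then $\pi(S)=S$, so the entries of $\pi$ in the positions belonging to $S$ are exactly the elements of $S$, and standardizing that length-$9$ subsequence yields a permutation that (i) is order-isomorphic to $\pi$ restricted to $S$, hence lies in $\S_9^\star$, and (ii) is a pattern contained in $\pi$, hence avoids $123$. A finite check shows $\Av_{9}^\star(123)=\emptyset$, a contradiction. So the only computation ever needed is at $n=3$, and all of the delicate run-versus-cycle bookkeeping you were planning for general $n$ can be discarded; alternatively, your own structural analysis, carried out completely just for $n=3$, would serve as that finite verification and then propagate upward by this restriction argument.
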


\begin{proof}
It is easily verified that $|\Av_{9}^\star(123)| = 0$. If there were some $n>3$ so that $\pi\in \Av_{3n}^\star(123)$, taking any three 3-cycles of $\pi$ and rescaling them would give a permutation in $\Av_{9}^\star(123)$. Thus $|\Av_{3n}^\star(123)| = 0$ for all $n\geq 3$. 
\end{proof}

\begin{thm} For $n\geq 1$, we have 
 $$|\Av_{3n}^\star(132, 213)| =|\Av_{3n}^\star(132, 321)| =2$$
 and
  $$|\Av_{3n}^\star(132, 231)| =|\Av_{3n}^\star(231,321)| =1.$$
 For all pairs not equivalent to the ones listed above via inverse or reverse-complement, there are no permutations composed of only 3-cycles that avoid that pair when $n>2$. 
\end{thm}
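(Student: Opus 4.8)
The plan is to prove the final theorem by a careful case analysis over all $\binom{6}{2}=15$ unordered pairs of length-3 patterns, reduced dramatically by the symmetry operations already available in the excerpt. First I would invoke the two symmetries the paper established, namely $\pi\in\Av^\star_{3n}(\sigma)$ iff $\pi^{-1}\in\Av^\star_{3n}(\sigma^{-1})$, and iff $\pi^{rc}\in\Av^\star_{3n}(\sigma^{rc})$. These generate a group acting on patterns: inverse fixes $132\leftrightarrow 213$, $231\leftrightarrow 312$, and fixes $123,321$; reverse-complement sends $132\leftrightarrow 213$, $231\leftrightarrow 312$, and swaps $123\leftrightarrow 321$. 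Applying these to \emph{pairs} of patterns partitions the $15$ pairs into a handful of orbits, so it suffices to check one representative from each orbit. I would organize the proof by first listing these orbits explicitly and noting that the stated counts are constant in $n$, hence preserved under the symmetries (which are bijections on $\Av^\star_{3n}$).

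Next I would handle the four families that give nonzero constant counts. For each, the key observation is that a permutation in $\S^\star_{3n}$ is built from $n$ disjoint $3$-cycles, each realizing one of the six length-3 patterns in its three positions, so avoiding a pattern first constrains which \emph{internal} cycle shapes are allowed and then constrains how cycles can interleave. Concretely, avoiding $132$ forces every cycle to have the form $312$ or $231$ (as already used in Section~\ref{132}), and additionally avoiding $213$ or $321$ or $231$ further restricts both the cycle forms and their relative arrangement. For $\Av^\star_{3n}(132,231)$ I expect to show every cycle must be of form $312$ and the cycles must nest in the unique decreasing-block pattern, forcing $\pi=(1,3,2)(4,6,5)\cdots$ up to the single global arrangement, yielding exactly one permutation; the companion pair $\Av^\star_{3n}(231,321)$ follows by symmetry. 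For $\Av^\star_{3n}(132,213)$ and $\Av^\star_{3n}(132,321)$ I would argue that exactly two rigid global arrangements survive (essentially the ``all cycles of form $312$'' arrangement and its reverse-complement/inverse partner), giving the count $2$.

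Finally I would dispose of all remaining orbits by showing the count is $0$ for $n>2$. The cleanest mechanism, mirroring the proof of Theorem~\ref{Thm123}, is a \emph{rescaling} argument: if some $\pi\in\Av^\star_{3n}(\sigma_1,\sigma_2)$ existed for large $n$, then selecting any three of its $3$-cycles and order-isomorphically rescaling them to a permutation on $9$ letters would produce an element of $\Av^\star_9(\sigma_1,\sigma_2)$; since each surviving cycle realizes a fixed length-3 pattern, I would verify by direct (finite) inspection that $\Av^\star_9(\sigma_1,\sigma_2)=\varnothing$ for each remaining orbit representative, and for the small values $n=1,2$ I would check by hand that the counts either already vanish or fall outside the listed families. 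The main obstacle, and the step I would spend the most care on, is the rigidity arguments for the nonzero cases: proving that the interleaving of cycles is \emph{forced} (not merely that the internal cycle shapes are restricted) requires the kind of positional analysis used in Lemma~\ref{PiConditions} and Lemma~\ref{lem:Lem321}, where one shows any deviation from the prescribed arrangement creates a forbidden subpattern using one element from each of two or three distinct cycles. Getting the interleaving constraints exactly right, and confirming that the counts are genuinely constant in $n$ rather than growing, is where the real content lies.
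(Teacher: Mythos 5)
The paper states this theorem without proof, so your proposal can only be measured on its own terms. The architecture you choose --- reduce the $15$ pairs by symmetry, settle the nonzero orbits by rigidity arguments, and kill everything else by restriction to three cycles together with Theorem~\ref{Thm123} --- is the natural one, and your treatment of the zero cases is essentially sound: every remaining orbit either contains $123$ (hence is empty for $n\geq 3$) or is $\{231,312\}$ (empty for all $n$, since each $3$-cycle already realizes a $231$ or $312$ pattern on its own support). However, there are three concrete errors. First, your symmetry group is misidentified: the inverse fixes each of $132$ and $213$ (both are involutions as permutations) and swaps only $231\leftrightarrow 312$, while reverse-complement fixes $123$ and $321$ and swaps $132\leftrightarrow 213$ and $231\leftrightarrow 312$. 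In particular no available symmetry sends $123$ to $321$; the one you posit would merge the orbit of $\{123,132\}$ (count $0$ for $n>2$) with that of $\{321,213\}$ (count $2$), which is false and would corrupt the whole reduction. Second, for the same reason $\{231,321\}$ is \emph{not} in the orbit of $\{132,231\}$ --- the latter's orbit is $\{132,231\},\{132,312\},\{213,231\},\{213,312\}$ --- so the case $(231,321)$ does not ``follow by symmetry'' and requires its own argument. Third, your proposed unique element of $\Av_{3n}^\star(132,231)$, the block permutation $(1,3,2)(4,6,5)\cdots=312\,645\cdots$, actually \emph{contains} $132$ for $n\geq 2$ (e.g.\ the subsequence $1,6,4$). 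The correct unique permutation is the fully interleaved $\prod_{i=1}^n(i,\,3n+1-i,\,2n+1-i)$, whose one-line notation is the decreasing word $3n,\dots,2n+1$ followed by $n,\dots,1$ followed by the increasing word $n+1,\dots,2n$; for $n=2$ this is $652134$, and one checks by hand it is the only element of $\Av_6^\star(132,231)$.

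These errors land precisely in the part of the proof you defer (``I expect to show,'' ``I would argue''), and your one concrete guess at a forced structure is wrong, so the rigidity step cannot be waved through. To finish, you need a separate positional argument for each of the four distinct nonzero orbits $\{132,213\}$, $\{132,321\}$, $\{132,231\}$, $\{231,321\}$, in the style of Lemma~\ref{PiConditions}: for example, avoiding $\{132,231\}$ is equivalent to every element being a left-to-right minimum or a right-to-left minimum, i.e.\ the permutation decreases to $1$ and then increases, and one must then show the $3$-cycle condition forces the interleaved permutation above and nothing else. Analogous characterizations (decreasing sequences of increasing blocks of consecutive values for $\{132,213\}$, etc.) give the counts $2$ in the other cases, but each must actually be carried out.
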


\bibliographystyle{amsplain}

\begin{thebibliography}{99}

\bibitem{AE14} K. Archer and S. Elizalde, Cyclic permutations realized by signed shifts, \textit{J. Comb.} 5(1) (2014), 1--30. 

\bibitem{Aval} J.-C. Aval, Multivariate Fuss-Catalan numbers, \textit{Discrete Math.} 308(20) (2008), 4660--4669.

\bibitem{BS19} M. B\'{o}na and R. Smith, Pattern avoidance in permutations and their squares, \textit{Discrete Math.} 342(11) (2019), 3194--3200. 

\bibitem{BD19} A. Burcroff and C. Defant, Pattern-avoiding permutation powers, \textit{Discrete Math.}  343(11) (2020), 1--11.

\bibitem{DRS07} E. Deutsch, A. Robertson, and D. Saracino, Refined restricted involutions, \textit{European J. Combin.} 28(1) (2007), 481--498. 

\bibitem{GM02} O. Guibert and T. Mansour, Restricted 132-involutions, \textit{S\'{e}minaire Lotharingien de Combinatoire} 48 (2002), Article B48a. 

\bibitem{SS85} R. Simion and F. W. Schmidt, Restricted permutations, \textit{European J. Combin.} 6 (1985), 383--406. 


\end{thebibliography}

\end{document}